\newtheorem{myDef}{Definition}
\newtheorem{myTheo}[myDef]{Theorem}
\newtheorem{mylem}[myDef]{Lemma}
\newtheorem{corollary}[myDef]{Corollary}
\newtheorem{remark}[myDef]{Remark}
\numberwithin{equation}{section}
\numberwithin{myDef}{section}
\begin{document}

\title[Continuous elements satisfying MT]
     {Continuous finite elements satisfying 
     a strong discrete Miranda--Talenti identity}

\author [D. Gallistl \and S. Tian]{Dietmar Gallistl \and Shudan Tian}
\address[D. Gallistl]{Friedrich-Schiller-Universit\"at Jena, Institut f\"ur Mathematik,
         Ernst-Abbe-Platz 2, 07743 Jena, Germany}
\email{dietmar.gallistl [at] uni-jena.de}
\address[S. Tian]{School of Mathematics and Computational Science,
         Xiangtan University, Xiangtan 411105, China}
\email{shudan.tian [at] xtu.edu.cn}

\thanks{The first author is supported by the 
European Research Council
(ERC Starting Grant \emph{DAFNE}, agreement ID 891734).
The second author acknowledges support by 
Sino-German (CSC-DAAD) Postdoc Scholarship Program, 2021 ID 57575640%
}

\subjclass[2020]{65N15, 65N30}
\keywords{Miranda--Talenti inequality, nonconforming finite element,
          nondivergence form, biharmonic}


\begin{abstract}
This article introduces continuous $H^2$-nonconforming finite
elements in two and three space dimensions
which satisfy a strong discrete Miranda--Talenti
inequality in the sense that the global $L^2$ norm of the 
piecewise Hessian is bounded by the $L^2$ norm of the piecewise
Laplacian.
The construction is based on globally continuous finite element
functions with $C^1$ continuity on the vertices (2D)
or edges (3D).
As an application, these finite elements are used to approximate
uniformly elliptic equations in non-divergence form under
the Cordes condition without additional
stabilization terms.
For the biharmonic equation in three dimensions,
the proposed methods has less degrees of freedom
than existing nonconforming schemes of the same order.
Numerical results in two and three dimensions confirm
the practical feasibility of the proposed schemes.
\end{abstract}

\maketitle

\section{Introduction}

It is a classical result that,
for any bounded convex domain $\Omega\subseteq\mathbb R^d$, 
the Hessian of any function $v\in H^1_0(\Omega)\cap H^2(\Omega)$
can be bounded, in the $L^2$ norm, by the Laplacian:
\begin{equation}
 \|D^2 v\|_0\leq \|\triangle v\|_0
 \quad\text{for all } v\in H^1_0(\Omega)\cap H^2(\Omega).
 \label{MT}
\end{equation}
Here and throughout this article, the spaces $H^k(\Omega)$
with $k\geq 0$ denote the usual $L^2$-based Sobolev spaces,
(the subindex $0$ refers to a homogeneous Dirichlet boundary
condition in the sense of traces)
and $\|\cdot\|_0$ is the $L^2$ norm over $\Omega$.
Estimate \eqref{MT} is known as the
\emph{Miranda--Talenti inequality},
named after the authors of \cite{Miranda} and \cite{Talenti},
and proofs thereof can be found in \cite{bihar-regular,Suli2013}.
If $\Omega$ is polytopal (not necessarily convex),
estimate \eqref{MT} becomes even an identity.
The Miranda--Talenti estimate is critical for the well-posedness
of second-order partial differential equations (PDEs) in
nondivergence form
\begin{equation}
A:D^2u \equiv \sum_{i,j=1}^d A_{ij}\partial^2_{ij} u = f
\quad \text{in } \Omega,\quad u=0 \text{ on }\partial\Omega
\label{nondiv}
\end{equation}
where $f\in L^2(\Omega)$ is a given right-hand side and the 
coefficient $A:\Omega\to\mathbb R^{d\times d}$ is essentially
bounded, uniformly elliptic, and satisfies the Cordes condition
(details follow in Section~\ref{s:nondiv}),
see \cite{MT,Suli2013}.
In this setting,
Galerkin methods in subspaces of $H^1_0(\Omega)\cap H^2(\Omega)$
are automatically well-posed because the discrete functions involved
enjoy \eqref{MT}. On the other hand, such subspaces are generally
cumbersome already in the planar case $d=2$ while they are considered
completely impractical for $d=3$.
For example, polynomial $H^2$-conforming elements in 3D need at least 
220 local degrees of freedom.
A classical alternative are nonconforming finite element spaces,
which are not subspaces of $H^2(\Omega)$ but rather involve weaker
continuity constraints on the discrete functions.
If, for example, the discrete functions are chosen continuous but
only with certain weak continuity conditions for the second-order
derivative, much simpler local constructions are possible.
In this paper, we are interested in nonconforming methods
satisfying the following discrete version of \eqref{MT}.

\begin{myDef}[strong discrete Miranda--Talenti property]\label{def:dmt}
Let $\mathcal T$ be a simplicial triangulation of $\Omega$
and let $V_{\mathcal T}$ be a space of piecewise polynomial functions
with respect to $\mathcal T$.
We say that $V_{\mathcal T}$ satisfies the 
strong discrete Miranda--Talenti property, if 
	\begin{equation}
	 \|D^2_{\mathcal T}v_h\|_0\leq \|\triangle_{\mathcal T} v_h\|_0
            \quad \text{for all }v_h\in V_{\mathcal T}.
		\label{dMT}
	\end{equation}
Here $D^2_{\mathcal T}v_h$ and $\triangle_{\mathcal T}v_h$ denote the
piecewise (with respect to $\mathcal T$) evaluation of
the Hessian and the Laplacian of the piecewise smooth function
$v_h$, respectively.
\end{myDef}

The reasoning in nonconforming finite elements (as opposed to
discontinuous Galerkin methods) is to impose certain continuity constraints
that make the use of the piecewise regular part of the differential operator
sufficient for a method to work without additional stabilization or
penalization. The validity of \eqref{dMT} additionally requires certain
compatibility conditions that allow for an integration by parts
(see Lemma~\ref{DMT1} below).
For equations of the biharmonic type, such as Kirchhoff plates,
nonconforming methods are a well established tool \cite{Ciarlet2002},
but they are usually unstable when applied to problems in
nondivergence form; the reason being that their
derivatives (piecewise with respect to some simplicial
triangulation of the domain)
do not satisfy \eqref{dMT}.
A simple example for this phenomenon is the Morley finite element
\cite{Ciarlet2002},
some basis functions of which are nonzero,
piecewise quadratic, and have normal derivatives with vanishing 
jump integrals across all $(d-1)$-dimensional hyperfaces.
The divergence theorem therefore implies that the
piecewise Laplacian (which is piecewise constant)
of such a function must vanish, making a discrete version of
\eqref{MT} impossible.

There exist several discretizations of \eqref{nondiv} that do
not require \eqref{dMT} as a discrete strong version
of \eqref{MT} and instead
involve stabilization terms:
the discontinuous Galerkin finite element method
of \cite{Suli2013},
mixed methods \cite{Mixednondiv},
methods based on
the discrete Miranda--Talenti inequality from \cite{MTlag},
and the stabilized Hermite element 
of \cite{Wuhermite}.
In this paper we are concerned with constructing
nonconforming methods that satisfy \eqref{dMT}
and are thus suited for approximating \eqref{nondiv}
without any stabilization term in the method.
We note that, in two dimensions, there exists a nonconforming
scheme satisfying a strong discrete Miranda--Talenti inequality
\cite{MTzhang}, which ---similar to the Fortin--Soulie element
\cite{FortinSoulie}--- is not a finite element in the sense of
local degrees of freedom.
In three space dimensions, we are not aware of any nonconforming
method satisfying \eqref{dMT}.
This article focus on constructing nonconforming elements satisfying 
discrete Miranda--Talenti inequality.
We contribute results for two and three space dimensions.
The construction is motivated by the $H^2$-nonconforming elements
which are $C^0$-elements with $C^1$ continuity on $(d-2)$-dimensional
hyperfaces (vertices if $d=2$; edges if $d=3$).
The latter turns out to be an important ingredient in the
construction but it is not sufficient,
as can be seen, e.g., from
the Adini element \cite{finiteBrenner,Ciarlet2002,finitewang} 
and the first-order Specht element \cite{Specht1ord}.
In two dimensions, we first prove that the 
second-order Specht element \cite{specht2,specht3}
satisfies \eqref{dMT} (see Lemma~\ref{Specht}).
This element belongs is $C^0$ continuous
and has $C^1$ continuity in the element vertices.
We then propose a high order family of 
$H^2$-nonconforming finite elements for $d=2$,
which is defined for any odd approximation order.
In the case $d=3$ we propose a family of nonconforming
finite elements satisfying \eqref{dMT}.
It is worth mentioning that, in the three-dimensional case,
our new element seems to be the first such element
in the literature.
The approximation of the lowest-order version
in the $H^2$ norm is of fourth order.
Since in three dimensions the $C^1$-continuity across edges 
requires polynomial shape functions to be of at least fifth order,
our element appears to be in some sense minimal,
whence we call it a low-order method.
It also has less degrees of freedom compared to other
known $H^2$-nonconforming elements of similar order
\cite{Guzm2012,HZT}.

The organisation of this paper as follows. 
In Section~\ref{s:prelim}, we introduce some notation
 and the strong discrete Miranda--Talenti inequality. 
The proof of Specht element satisfying discrete Miranda--Talenti inequality and 
designing 2D odd order elements are presented in 
Section~\ref{s:2d}.
In Section~\ref{s:3d}, we construct the 3D satisfying 
discrete Miranda--Talenti property $H^2$-nonconforming elements. 
Section~\ref{s:bih} and Section~\ref{s:nondiv} present
the application the the biharmonic equation and 
elliptic problems in nondivergence form.
In the Section~\ref{s:num} we present numerical experiments,
followed by the concluding Section~\ref{s:conclusion}.

\section{Preliminaries}\label{s:prelim}

\subsection{General notation}
Throughout this paper, $\Omega\subset \mathbb{R}^d$, $d=2,3$ is an open and bounded
domain with polytopal Lipschitz boundary $\partial\Omega$.
The standard Sobolev spaces are denoted by $H^m(\Omega)$, $m\geq 0$, with
$H^0(\Omega)=L^2(\Omega)$.
The space of tensor-valued functions with all components in $H^m(\Omega)$
is denoted by $H^m(\Omega;\mathbb{R}^{d\times d})$.
The space $H_0^1(\Omega)$ is the subspace of $H^1(\Omega)$ of functions
with vanishing trace on $\partial\Omega$.
For any measurable subdomain $G\subseteq \Omega$,
the $L^2$ inner product is denoted by
$(\cdot,\cdot)_G$.
The symbols $\nabla$ and $D^2$ denote the gradient and
the Hessian, respectively.
The space of polynomial functions over $G\subseteq \Omega$ of degree not greater than $\ell$
is denoted by $P_{\ell}(G)$.
Given a unit vector $\xi\in\mathbb R^d$, the directional derivative
with respect to $\xi$
is denoted by $\partial_\xi u= \nabla u \cdot \xi$.

\subsection{Meshes and discrete functions}\label{ss:meshes}
Let $\mathcal{T}$ be a shape-regular simplicial triangulation of 
$\Omega$, and $h_T$ be the diameter of the element $T\in\mathcal T$.
The global mesh size reads $h=\max_{T\in\mathcal{T}}\{h_T\}$.
Let $H^m(\mathcal{T})$ be the spaces of functions that belong to
$H^m(T)$ when restricted to the interior 
of any simplex $T\in\mathcal T$
(we follow the convention to not explicitly indicate the interior
of the closed domain $T$ in the notation of Sobolev spaces).
Let $\mathcal F$ denote the set of all $(d-1)$-dimensional hyperfaces
of the triangulation $\mathcal T$.
For brevity, we will sometimes write ``face'' instead of 
``$(d-1)$-dimensional hyperface''; in particular ``face'' and ``edge''
are synonymous in the planar case $d=2$.
To each face $F\in\mathcal F$ we assign a fixed unit normal vector
$n_F$ with the convention that $n_F$ coincides with the outward-pointing
unit vector to $\Omega$ if $F\subset\partial\Omega$ is a boundary face.
Given any interior face $F\in\mathcal F$ shared by two elements
$T_+,T_-$, the normal derivative jump of a function $v$ across $F$ is defined as
$$
[\partial_n v]|_F:=\partial_{n_F}v|_{F\cap T_+} - \partial_{n_F}v|_{F\cap T_-}
$$
where the vector $n_F$ is the normal pointing from $T_+$
towards $T_-$; it coincides with the outer unit normal to
$\partial_{T_+}$ on $F$ and to the inner unit normal to $\partial_{T_-}$
on $F$.
If $F\subset \partial \Omega$ is a boundary face,
$[\partial_nu]|_F$ 
denotes the normal derivative restricted to $F$.
The definition of the jump does not depend on the choice of sign for 
$n_F$.
We will denote 
the integration of the normal derivative of $v$ on 
$(d-1)$-dimensional sides by $\int_F\partial_nv\mathrm{d}S$.
For a piecewise polynomial function $v$, we say that the normal
derivative enjoys $k$th order moment continuity across the face $F$
if
$\int_F [\partial_n v] p_k\,dS = 0$ for all $p_k\in P_k(F)$.
The piecewise versions of the Laplacian and the Hessian
are defined as
$\triangle_{\mathcal T} v :=\sum_{T \in \mathcal{T}} \triangle v|_T$,
$D^2_{\mathcal T} v :=\sum_{T \in \mathcal{T}} D^2 v|_T$
where the convention is used that $\triangle v|_T$
and $D^2 v|_T$ are extended to the whole domain $\Omega$
by zero.
These piecewise differential operators do not consider any 
inter-element jumps (as their distributional interpretation would to).

\subsection{An identity from integration by parts}

Our construction of nonconforming elements which satisfy the strong discrete 
Miranda-Talenti property \eqref{dMT}
will be based on $C^0$-continuous elements with $C^1$ continuity across 
$(d-2)$-dimensional hyperfaces. A proof of 
the following identity based on integration
by parts can be found in \cite{Wuhermite}.
\begin{mylem}
Let $\mathcal{T}$ be a regular triangulation of 
$\Omega\subset \mathbb{R}^d,d=2,3$ and $V_{\mathcal T}\subseteq H^2(\mathcal T)$
be a $C^0$ finite element space that has $C^1$ continuity on $(d-2)$-dimensional 
hyperfaces.
Then any $v_h\in V_{\mathcal T}$ satisfies
\begin{equation*}
\|\triangle_{\mathcal T}v_h\|_0^2 
 = \|D^2_{\mathcal T}v_h\|_0^2+2\sum_{i=1}^{d-1}\sum_{F\in \mathcal{F}}([\partial_n v_h],\partial^2_{t_{F_i}}v_h)_{F}.
\end{equation*}
Here $(t_{F_i})_{i=1}^{d-1}$
is any orthonormal set of tangential vectors to 
$F\in\mathcal F$.
\label{DMT1}
\end{mylem}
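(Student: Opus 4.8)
The plan is to reduce the statement to a pointwise algebraic identity that writes the integrand $(\triangle v)^2-|D^2v|^2$ as a divergence, and then to carefully track the boundary terms produced by the divergence theorem on each simplex. The starting point is the elementwise identity
\[
(\triangle v)^2-|D^2v|^2=\operatorname{div}\!\big((\triangle v)\nabla v-(D^2v)\nabla v\big),
\]
valid for every smooth $v$; I would verify it by expanding both sides, the third-order terms $\nabla v\cdot\nabla(\triangle v)$ cancelling. Writing $\mathbf F:=(\triangle v)\nabla v-(D^2v)\nabla v$, integrating over each $T\in\mathcal T$, and applying the divergence theorem gives $\int_T((\triangle v)^2-|D^2v|^2)\,dx=\int_{\partial T}\mathbf F\cdot n\,dS$. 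Summing over all elements turns the left-hand side into $\|\triangle_{\mathcal T}v\|_0^2-\|D^2_{\mathcal T}v\|_0^2$, so it remains to identify the accumulated boundary flux.

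Next I would rewrite the flux $\mathbf F\cdot n$ in the orthonormal frame $\{n,t_{F_1},\dots,t_{F_{d-1}}\}$ attached to a face, abbreviating $t_i:=t_{F_i}$. Decomposing $\nabla v$ into normal and tangential parts and using $\triangle v=\partial_{nn}v+\sum_i\partial_{t_it_i}v$, the purely normal second-derivative terms cancel and one is left with
\[
\mathbf F\cdot n=\sum_{i=1}^{d-1}\big((\partial_n v)\,\partial_{t_it_i}v-(\partial_{t_i}v)\,\partial_{nt_i}v\big).
\]
When the element contributions are collected face by face, an interior face $F$ shared by $T_+,T_-$ carries $\int_F(\mathbf F_+-\mathbf F_-)\cdot n_F\,dS$. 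Here the $C^0$-continuity enters: the trace of $v$ on $F$ is single-valued, so the tangential quantities $\partial_{t_i}v$ and $\partial_{t_it_i}v$ do not jump, whereas $\partial_n v$ jumps by $[\partial_n v]$ and $\partial_{nt_i}v=\partial_{t_i}(\partial_n v)$ jumps by $\partial_{t_i}[\partial_n v]$. The face contribution thus becomes $\int_F\sum_i\big([\partial_n v]\,\partial_{t_it_i}v-(\partial_{t_i}v)\,\partial_{t_i}[\partial_n v]\big)\,dS$, and boundary faces are handled identically under the convention $[\partial_n v]=\partial_n v$.

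Finally I would integrate by parts on each face $F$, viewed as a flat $(d-1)$-dimensional manifold with surface gradient $\nabla_F$ and Laplace--Beltrami operator $\triangle_F=\sum_i\partial_{t_it_i}$. Since $\sum_i(\partial_{t_i}v)\partial_{t_i}[\partial_n v]=\nabla_F v\cdot\nabla_F[\partial_n v]$, surface integration by parts turns the second summand into $+\int_F(\triangle_F v)[\partial_n v]\,dS$ plus a conormal term $\int_{\partial F}(\partial_\nu v)[\partial_n v]\,d\sigma$ on the $(d-2)$-dimensional boundary $\partial F$, with $\nu$ the outward conormal in $F$. This doubles the first summand and produces exactly the claimed factor $2$, \emph{provided} the conormal integrals vanish. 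This last point is where I expect the real work, and it is precisely where the hypothesis is used: every component of $\partial F$ is a $(d-2)$-dimensional hyperface (a vertex for $d=2$, an edge for $d=3$), and the assumed $C^1$-continuity there forces $\nabla v$, hence $[\partial_n v]$, to be single-valued, so $[\partial_n v]$ vanishes on $\partial F$ and every conormal integral drops out. Collecting the surviving terms $2\sum_{i}\sum_{F}([\partial_n v],\partial_{t_it_i}v)_F$ yields the identity. The main obstacle is the delicate bookkeeping in this last step: orienting the conormal $\nu$ consistently across the two sides of each interior face, and verifying that the boundary faces and the $(d-2)$-faces lying on $\partial\Omega$ are correctly accounted for when the conormal contributions are grouped by $(d-2)$-hyperface.
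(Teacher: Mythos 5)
The paper does not actually prove Lemma~\ref{DMT1}; it defers to \cite{Wuhermite}. Your route --- the pointwise identity $(\triangle v)^2-|D^2v|^2=\operatorname{div}\bigl((\triangle v)\nabla v-(D^2v)\nabla v\bigr)$, the elementwise divergence theorem, rewriting the flux in the frame $\{n,t_1,\dots,t_{d-1}\}$, exploiting $C^0$ continuity to make all tangential derivatives single-valued on a face, and a surface integration by parts whose conormal terms are annihilated by the $C^1$ continuity on $(d-2)$-hyperfaces --- is exactly the standard argument behind that reference, and every step you carry out for \emph{interior} faces is correct.

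The gap is the boundary faces, which you flag as ``delicate bookkeeping'' but do not resolve --- and it is not mere bookkeeping. On a boundary face $F$ the convention is $[\partial_nv]=\partial_nv$, and the $C^1$ continuity at the $(d-2)$-subfaces of $F$ only makes $\nabla v$ single-valued there; it does not make the one-sided normal derivative vanish. So the conormal integrals $\int_{\partial F}(\partial_\nu v)\,\partial_nv\,d\sigma$ produced by your surface integration by parts on boundary faces do not drop out, and in general they do not cancel when grouped around boundary $(d-2)$-hyperfaces either. In fact the identity as literally stated is false without a boundary condition: take $\Omega=(0,1)^2$, any triangulation, and $v=xy$, which is globally smooth and hence lies in any such $V_{\mathcal T}$ built from quadratics or higher; then $\|\triangle_{\mathcal T}v\|_0^2=0$ and $\|D^2_{\mathcal T}v\|_0^2=2$, while all interior jumps vanish and $\partial_t^2v=0$ on every axis-parallel boundary edge, so the claimed right-hand side equals $2$ and the left-hand side equals $0$. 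The lemma is only ever applied in the paper to spaces $V_{\mathcal T}\subseteq H^1_0(\Omega)$, and with that hypothesis the repair is immediate and needs \emph{no} integration by parts on boundary faces: $v=0$ on a flat boundary face forces $\partial_{t_i}v=\partial^2_{t_it_i}v=0$ there, so the flux $\sum_i\bigl((\partial_nv)\,\partial^2_{t_it_i}v-(\partial_{t_i}v)\,\partial^2_{nt_i}v\bigr)$ vanishes identically on $F$, matching the vanishing of $([\partial_nv],\partial^2_{t_i}v)_F$ on the right-hand side. You should therefore add the hypothesis $v_h\in H^1_0(\Omega)$ (or equivalently restrict the surface integration by parts to interior faces and dispose of boundary faces by this direct observation) to make the argument close.
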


\section{Design of the finite element in two dimensions}\label{s:2d}
 
We will work with shape function spaces $\mathcal{P}_{T}$ 
with respect to an element $T\in\mathcal T$ that
consist of the full polynomial space of degree $\ell$
plus a space $B_{T,\ell}$ of some particular bubble functions, i.e.,
\begin{equation} 
\mathcal{P}_{T} = P_{\ell}(T)+B_{T,\ell}. 
\label{SFspace}
\end{equation}
The careful choice of the space 
$B_{T,\ell}\subseteq H^1_0(T)$
turns out crucial for the desired property \eqref{dMT}.

\subsection{Notation}
We use the following convention on enumeration of vertices and 
edges of a triangle.
The three vertices of a triangle are denoted by
$p_i,i=1,2,3$ 
and $F_i,i=1,2,3$ are the three edges.
We assume a counter-clockwise enumeration as in Figure~\ref{f:triangle}.
In particular, $F_i$ is the convex combination of 
$p_{i-1}$ and $p_{i+1}$.
Here and throughout this section,
we always omit $\mathrm{mod}3$ in the subscript of $p_{i-1}$ and $p_{i+1}$. 
The functions
$\lambda_i,i=1,2,3$ are the three barycentric coordinates on the element, i.e., $\lambda_i$ is an
affine function on $T$ satisfying $\lambda_i(a_j)=\delta_{i,j},~ 1\leq i,j\leq 3$.
We define the non-negative cubic volume bubble function as
$b_T=\lambda_1\lambda_2\lambda_3$ 
and the quadratic edge bubble functions by $b_{F_i}=\lambda_{i-1}\lambda_{i+1}$.

	\begin{figure} \setlength\unitlength{1pt}
	\begin{center} 
	\begin{tikzpicture}
\draw (0,0) -- (2,0);	
\draw (0,0) -- (1,1.732);
\draw (2,0) -- (1,1.732);
\node [below] at (1,0) {$F_1$};
\node [above] at (1,1.732) {$p_1$};
\node [right] at (2,0) {$p_3$};
\node [left] at (0,0) {$p_2$};	
\node [left] at (0.5,1.732/2) {$F_3$};
\node [right] at (1.5,1.732/2) {$F_2$};
\draw[fill] (1,1.732) circle [radius=0.05];
\draw[fill] (2,0) circle [radius=0.05];
\draw[fill] (0,0) circle [radius=0.05];
	\end{tikzpicture}
\end{center}
\caption{Enumeration of vertices and edges in a triangle.
         \label{f:triangle}}
\end{figure}

\subsection{Review of the second-order Specht element}
References
\cite{specht,specht2,specht3} introduce the second-order Specht element, 
which has the convergence order $2$ under optimal regularity
assumptions.
The local shape functions on an element $T\in\mathcal T$ are as follows
$$
 \mathcal{P}_T=P_3(T)+\mathrm{span}\{\phi_i:i=1,2,3\},
$$
where the three functions $\phi_i$ are defined as
$$
\phi_i = 2b_T\big(5(\lambda_i-\lambda_i^2-2\lambda_{i-1}\lambda_{i+1})-1\big),
~1\leq i\leq 3
.
$$
The space of functions over $\Omega$ that belong to $\mathcal P_T$
when restricted to any $T\in\mathcal T$ is denoted by $\mathcal P_{\mathcal T}$.
The global finite element space $V_{\mathcal T}$ reads
$$
\begin{aligned}
V_{\mathcal T} := 
H^1_0(\Omega)\cap 
\left\{
   v \in \mathcal P_{\mathcal T } :
   \begin{aligned}
   &v\text{ and }\nabla v \text{ are continuous at all vertices}
   \\
   &\text{and }
         \textstyle\int_F [\partial_n v]\,ds = 0 \text{ for any interior edge } F
\end{aligned}
\right\}
.
\end{aligned}
$$
It is direct to verify that the functions of $V_{\mathcal T}$ are continuous
with $C^1$ continuity at the vertices.
The next result establishes the strong discrete Miranda--Talenti
property \eqref{dMT} for
the second-order Specht element.

\begin{mylem}
Any $v_h\in V_{\mathcal T}$ satisfies
$$
\|\triangle_{\mathcal T}v_h\|_0^2 = \|D^2_{\mathcal T}v_h\|_0^2.
$$ 
\label{Specht}
\end{mylem}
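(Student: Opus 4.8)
The plan is to deduce the claim from the integration-by-parts identity of Lemma~\ref{DMT1}. Since $d=2$, that identity involves a single tangential direction $t_F$ per edge and reads
$$
\|\triangle_{\mathcal T}v_h\|_0^2 = \|D^2_{\mathcal T}v_h\|_0^2 + 2\sum_{F\in\mathcal F}([\partial_n v_h],\partial^2_{t_F}v_h)_F .
$$
Everything therefore reduces to showing that the face sum on the right-hand side vanishes for every $v_h\in V_{\mathcal T}$. I would try to prove that each summand is individually zero, distinguishing boundary and interior edges. Note first that $\partial^2_{t_F}v_h$ is single-valued on $F$, since it is a tangential derivative of the trace $v_h|_F$, and $v_h$ is continuous.

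On a boundary edge $F\subset\partial\Omega$ the membership $v_h\in H^1_0(\Omega)$ forces $v_h|_F=0$, hence $\partial^2_{t_F}v_h|_F=0$ and the term drops out. For an interior edge $F$, the key observation is that every bubble $\phi_i$ contains the factor $b_T=\lambda_1\lambda_2\lambda_3$, which vanishes on all edges; consequently $v_h|_F$ is the trace of the $P_3(T)$-part alone, i.e.\ a cubic along $F$, so that $\partial^2_{t_F}v_h|_F$ is an \emph{affine} function on $F$. I would split this affine function into its mean (constant) part and a remainder that is odd about the midpoint of $F$. The constant part is annihilated by the defining moment condition $\int_F[\partial_n v_h]\,ds=0$. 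Hence the whole term vanishes as soon as the jump $[\partial_n v_h]$ is shown to be \emph{symmetric about the midpoint} of $F$, because its pairing with the odd remainder then integrates to zero.

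The heart of the argument is thus this symmetry of $[\partial_n v_h]$, which I would establish by splitting the jump into its $P_3$-contribution and its bubble-contribution. For the bubbles I would evaluate $\partial_n\phi_i$ on each edge explicitly: on $F_j$ the factor $b_T$ contributes $\partial_{n}b_T|_{F_j}=(\partial_n\lambda_j)\,b_{F_j}$, while the remaining factor of $\phi_i$ reduces, using $\lambda_{j-1}+\lambda_{j+1}=1$ on $F_j$, to a polynomial in $b_{F_j}=\lambda_{j-1}\lambda_{j+1}$; thus $\partial_n\phi_i|_{F_j}$ is a polynomial in the symmetric quantity $b_{F_j}$ and is therefore invariant under swapping the two endpoints of $F_j$, i.e.\ symmetric about the midpoint. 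For the $P_3$-part I would invoke the vertex $C^1$-continuity: since $\nabla v_h$ is continuous at the two endpoints of $F$, the jump $[\partial_n v_h]$ vanishes at both vertices; as the bubble contribution already vanishes there, the $P_3$-contribution to the jump is a quadratic vanishing at both endpoints, hence a multiple of $b_{F_j}$ and again symmetric about the midpoint. Adding the two parts gives the desired symmetry.

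The main obstacle I anticipate is precisely the explicit evaluation showing that $\partial_n\phi_i|_{F_j}$ is a polynomial in $b_{F_j}$; this is where the particular form of the Specht bubbles $\phi_i=2b_T\big(5(\lambda_i-\lambda_i^2-2\lambda_{i-1}\lambda_{i+1})-1\big)$ enters essentially, and a sign or index slip there would destroy the symmetry. Once the midpoint symmetry of the jump is in place, the cancellation of the constant part via the zero-mean condition and of the odd part via symmetry is routine, and summing the (now vanishing) edge contributions yields the identity.
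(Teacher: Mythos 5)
Your proposal is correct. It shares the paper's overall strategy --- invoke Lemma~\ref{DMT1}, use the zero-mean jump condition to subtract the integral mean of the affine function $\partial^2_{t}v_h$, and reduce everything to pairing the jump $[\partial_n v_h]$ against a multiple of $\lambda_{i+1}-1/2$ --- but the mechanism by which you kill that last pairing is genuinely different. The paper proves the quadrature identity \eqref{simpson}, which expresses $\fint_{F}\lambda_{i+1}[\partial_n v_h]\,\mathrm{d}s$ through the vertex values of the jump and its mean (Simpson's rule for the cubic part, a direct moment computation for the bubbles), and then uses the vertex $C^1$-continuity only at the very end. You instead prove a structural statement: $[\partial_n v_h]$ is \emph{even} about the edge midpoint, obtained by checking that $\partial_n\phi_i|_{F_j}=2(\partial_n\lambda_j)\,b_{F_j}\bigl(5(\lambda_i-\lambda_i^2-2\lambda_{i-1}\lambda_{i+1})-1\bigr)|_{F_j}$ is a polynomial in $b_{F_j}$ (this does work out in all three cases $i=j,\,j\pm1$, using $\lambda_i(1-\lambda_i)=b_{F_j}$ on $F_j$ for $i\neq j$), and by using the vertex $C^1$-continuity early to force the $P_3$-part of the jump to be a multiple of $b_{F_j}$. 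The conclusion then follows from even-times-odd parity rather than from a quadrature rule. Your route is slightly more conceptual and makes the role of the Specht bubbles' symmetry explicit, at the cost of the same kind of direct verification the paper relegates to ``direct computations''; you also handle boundary edges explicitly ($v_h|_F=0$ kills $\partial^2_t v_h$ there), which the paper leaves implicit. Both arguments are complete once the bubble computation is carried out.
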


\begin{proof}
In the present two-dimensional case, we assign to any edge $F$ the unit tangent
$(-n_{F,2},n_{F,1})$ and abbreviate the tangential derivative
by $\partial_t$.
	According to Lemma \ref{DMT1}, the continuity of $v_h$ over interior edges
	and the continuity of $\nabla v_h$ at the vertices imply
	 \begin{align*}
\|\triangle_{\mathcal T}v_h\|_0^2 = \|D^2_{\mathcal T}v_h\|_0^2+2\sum_{F\in \mathcal{F}}([\partial_n v_h],\partial^2_{t}v_h)_{F}.
	\end{align*}
 Let $F_i\in\mathcal F$ be an interior edge
  with endpoints
 $p_{i-1}$ and $p_{i+1}$ and normal vector $n=n_{F_i}$.
 By the defining degrees of freedom,
 the integral over $F_i$ of the normal derivative jump
 of $v_h$ against constants vanishes.
 Therefore the second-order tangential derivative
 and its integral mean $m$ over $F_i$ satisfy
$$
 \int_{F_i}\partial^2_{t}v_h[\partial_nv_h]\mathrm{d}s =
  \int_{F_i} (\partial^2_{t}v_h - m ) [\partial_nv_h] \mathrm{d}s
  .
$$ 
 We note that
 $\partial^2_{t}v_h|_{F}$ is an affine function along $F_i$,
 whence $\partial^2_{t}v_h - m$ vanishes
 in the midpoint of $F_i$ and thus is a multiple of $\lambda_{i+1}-1/2$.
 Therefore there exists a constant $c$ such that
\begin{equation}
 \int_{F_i}\partial^2_{t}v_h[\partial_nv_h]\mathrm{d}s =
  c\int_{F_i} (\lambda_{i+1}-1/2) [\partial_nv_h] \mathrm{d}s
  .
  \label{e:simpson_prep}
\end{equation}
We next claim the identity
\begin{equation}
\fint_{F_i}\lambda_{i+1} [\partial_{n}v_h]\mathrm{d}s=
\frac{1}{12}([\partial_{n}v_h](p_{i+1})-[\partial_{n}v_h](p_{i-1}))
   +\frac{1}{2}\fint_{F_i}[\partial_{n}v_h]\mathrm{d}s
\label{simpson}
\end{equation}
where $[\cdot]$ always denotes the jump across $F_i$.
The proof of the identity utilizes the fact that $\lambda_{i+1}-1/2$
vanishes in the midpoint of $F_i$ and takes the value $\pm1/2$ at the 
vertices so that the result for the piecewise cubic part of $v_h$
follows from an application of the Simpson
quadrature rule, which is exact for polynomials of degree $3$.
Since the normal derivatives of the bubble functions $\phi_i$
have vanishing integrals against
multiples of $\lambda_i -1/2$ over the edges 
(which can be verified by direct computations)
and vanish in the vertices, the result holds for any $v_h\in V_{\mathcal T}$.
The combination of \eqref{e:simpson_prep} and \eqref{simpson}
shows
$$
 \int_{F_i}\partial^2_{t}v_h[\partial_nv_h]\mathrm{d}s = 0
$$
because the normal derivative jumps vanish in the vertices
due to the defining constraints of $V_{\mathcal T}$.
\end{proof}
\subsection{Extension of the second-order Specht element to any odd order}\label{section2D}
In this section, we will extend the Specht element to any odd order in 2 dimensions.
For any triangle $T\in\mathcal T$ and $i=1,2,3$
we define degrees of freedom (linear functionals) 
mapping any $v\in C^\infty(\mathbb R^2)$ to
\begin{subequations}
\begin{align}
    \label{e:dof2d_nodal_edge}
	v(a_i) \text{ and }
	\nabla v(a_i)
    \text{ and }
	\fint_{F_i}v q_{\ell-4}\mathrm{d}s \text{ for any }q_{\ell-4}\in P_{\ell-4}(F_i),\\
		  	\label{e:dof2d_normalA}
	\fint_{F_i}\partial_nv q_{\ell-4}\mathrm{d}s 
	\text{ for any }q_{\ell-4}\in P_{\ell-4}(F_i),\\
	\label{e:dof2d_normalB}
	\fint_{F_i}\partial_nv \,\lambda_{i+1}^{\ell-3}\mathrm{d}s,
	 \\
    \label{e:dof2d_volume}
	\fint_T v q_{\ell-6} \mathrm{d}x , 
	\text{ for any }q_{\ell-6} \in P_{\ell-6}(T).
\end{align}
	 \end{subequations}
If $\ell<6$, we interpret \eqref{e:dof2d_volume} as being void.
The dimension of the space
$\Sigma_{T,\ell}$ spanned by the linear functionals 
\eqref{e:dof2d_nodal_edge}--\eqref{e:dof2d_volume} is
at most $\mathrm{dim}P_{\ell}(T)+3$. 

In what follows we will say that a function defined over an edge $F$
is odd (resp.\ even), if, after an affine change of coordinates where $F$ is
mapped to the real interval $[-1,1]$, the transformed function is
odd (resp.\ even).

\begin{mylem}
Let $\ell\geq 4$ be even.
Then,
the degrees of freedom \eqref{e:dof2d_nodal_edge}, 
\eqref{e:dof2d_normalA}, \eqref{e:dof2d_volume}
are linear independent as functionals over
$P_{\ell}(T)$.
\label{uni2D}
\end{mylem}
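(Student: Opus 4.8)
The plan is to argue by unisolvence. First I would check that the functionals listed in \eqref{e:dof2d_nodal_edge}, \eqref{e:dof2d_normalA}, \eqref{e:dof2d_volume} number exactly $\dim P_\ell(T)=(\ell+1)(\ell+2)/2$: the nodal and gradient values contribute $9$, the two families of edge moments contribute $2\cdot 3(\ell-3)$, and the interior moments contribute $\dim P_{\ell-6}(T)=(\ell-4)(\ell-5)/2$, and these sum to $(\ell+1)(\ell+2)/2$. (Equivalently, these are exactly the functionals of $\Sigma_{T,\ell}$ except the three in \eqref{e:dof2d_normalB} accounting for the ``$+3$''.) Since the count matches the dimension of $P_\ell(T)$, linear independence of the functionals is equivalent to unisolvence, so it suffices to take $v\in P_\ell(T)$ annihilated by all of them and show $v=0$.

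Next I would peel off the boundary. On each edge $F_i$, mapped affinely to $[-1,1]$ with coordinate $s$, the trace $w:=v|_{F_i}\in P_\ell$ has a double zero at each endpoint, since $v$ and the tangential part of $\nabla v$ vanish at the two vertices; hence $w=(1-s^2)^2 g$ with $g\in P_{\ell-4}$. Testing the edge moments from \eqref{e:dof2d_nodal_edge} against $q=g\in P_{\ell-4}(F_i)$ gives $\int_{-1}^1(1-s^2)^2 g^2\,ds=0$, so $g\equiv 0$ and $w\equiv 0$. Thus $v$ vanishes on $\partial T$ and factors as $v=b_T r$ with $r\in P_{\ell-3}(T)$. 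I would then exploit \eqref{e:dof2d_normalA}: because $v$ vanishes on $\partial T$, its gradient on $F_i$ is purely normal, and a short computation gives $\partial_n v|_{F_i}=c_i(1-s^2)\rho_i/4$ with $c_i=\partial_n\lambda_i\neq 0$ and $\rho_i:=r|_{F_i}\in P_{\ell-3}$. The moment conditions against $P_{\ell-4}$ then say precisely that $\rho_i$ is orthogonal to $P_{\ell-4}$ with respect to the weight $(1-s^2)$, and the space of such $\rho_i$ is one-dimensional, spanned by the degree-$(\ell-3)$ orthogonal polynomial $J$ for this weight.

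The hard part is exactly here: without \eqref{e:dof2d_normalB} the normal moments leave a one-dimensional slack per edge, so $\rho_i=\gamma_i J$ does not vanish outright, and one must rule out $\gamma_i\neq 0$ by a global compatibility argument. This is where the hypothesis that $\ell$ is \emph{even} becomes indispensable: then $\ell-3$ is odd, and since the weight $(1-s^2)$ is even, $J$ is an odd polynomial with $J(\pm1)\neq 0$ and $J(-1)=-J(1)$. I would conclude that on each edge the two endpoint values of $r$ are negatives of one another, i.e. $r(p_{i-1})+r(p_{i+1})=0$ for $i=1,2,3$. Solving this cyclic $3\times 3$ system forces $r$ to vanish at all three vertices; since $J(\pm1)\neq 0$ this in turn forces every $\gamma_i=0$, whence $\rho_i\equiv 0$ and $r$ vanishes on $\partial T$. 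Therefore $r=b_T\tilde r$ with $\tilde r\in P_{\ell-6}(T)$ (and $r=0$ directly when $\ell<6$).

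Finally, $v=b_T^2\tilde r$, and the interior moments \eqref{e:dof2d_volume} read $\int_T b_T^2\tilde r\,q\,dx=0$ for all $q\in P_{\ell-6}(T)$; choosing $q=\tilde r$ gives $\int_T b_T^2\tilde r^2\,dx=0$, so $\tilde r\equiv 0$ and $v=0$. The only points requiring care are the parity claim for $J$ and the ensuing vertex compatibility; for odd $\ell$ the analogous $J$ is even and the compatibility system no longer closes, which is precisely the reason the extra functional \eqref{e:dof2d_normalB} is needed in the full element but not in this lemma.
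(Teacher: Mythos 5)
Your proof is correct and follows essentially the same route as the paper: factor out $b_T$ after killing the boundary trace, use the normal-derivative moments to identify the edge traces of the cofactor with the odd orthogonal polynomial for the even edge-bubble weight, close the cyclic vertex-compatibility system, and finish with $v=b_T^2\tilde r$ and the interior moments. The only addition is your explicit dimension count reducing linear independence to unisolvence, which the paper leaves implicit; this is a welcome clarification rather than a deviation.
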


\begin{proof}
	Let $v_h\in P_{\ell}(T)$ such that all degrees of freedom
	from \eqref{e:dof2d_nodal_edge}, \eqref{e:dof2d_normalA}, \eqref{e:dof2d_volume}
	vanish on $v_h$.
	We will prove $v_h\equiv 0$.
	Since the functionals \eqref{e:dof2d_nodal_edge} vanish on $v_h$,
	the values of $v_h$ and $\nabla v_h$ vanish at all vertices
	$a_i$ and all edge moments $\int_{F_i}v q_{\ell-4}\mathrm{d}s$,
	against polynomials $q_{\ell-4}$ of degree at most $\ell-4$
	are zero.
	Thus $v_h$ vanishes on the boundary of $T$ and there exists some 
	$g_{\ell-3}\in P_{\ell-3}(T)$ such that
	$$
	v_h = b_T g_{\ell-3}.
	$$
	Since the functionals
	\eqref{e:dof2d_normalA} vanish,
	we have for all $q_{\ell-4}\in P_{\ell-4}(F_i)$ that
	$$
	0=\int_{F_i}\partial_nv_hq_{\ell-4}\mathrm{d}s
	 =\int_{F_i}\partial_n (b_Tg_{\ell-3})q_{\ell-4}\mathrm{d}s
	 =\partial_n\lambda_i\int_{F_i} b_{F_i}g_{\ell-3}q_{\ell-4}\mathrm{d}s
	$$
	for $i=1,2,3$.
	Therefore,  $g_{\ell-3}|_{F_i}$ is a multiple of the $(\ell-3)$rd orthogonal
	polynomial $L_{\ell-3}^i$ over $F_i$ with respect to the even weight function
	$b_{F_i}$. That is, there is a coefficient $c_i$ such that
	$g_{\ell-3}|_{F_i} = c_i L_{\ell-3}^i$.
	Since $\ell-3$ is odd, it is known \cite{Spectral} that $L_{\ell-3}^i$
	is an odd function with
	$L_{\ell-3}^i(p_{i-1})= -L_{\ell-3}^i(p_{i+1})\neq 0$. 
    Since $g_{\ell-3}$ is continuous, evaluation of at the three vertices $p_1,p_2,p_3$
    results in
    $$
    c_1=-c_2, \quad c_2=-c_3, \quad c_3=-c_1.
    $$
    Therefore $c_1=c_2=c_3=0$ and so $g_{\ell-3}$ vanishes on the boundary
    $\partial T$.
    In consequence,
    $\partial_nv_h|_{\partial T}=0$.
    This proves the assertion in the case case $\ell=4$.
	If $\ell\geq 6$, we see that there exists a polynomial
	$g_{\ell-6}$ such that
	$$
	v_h = b_T^2 g_{\ell-6}.
	$$
    Since, by the vanishing degrees of freedom \eqref{e:dof2d_volume},
    the integral
    $\int_T v_h g_{\ell-6}\mathrm dx$ equals zero,
    we conclude that $g_{\ell-6}=0$ whence $v_h$ vanishes identically.
\end{proof}

\begin{remark}
 For the case of odd $\ell$, we illustrate that the above set of 
 degrees of freedom is generally not unisolvent.
 By $\hat L_{\ell-3}$ we denote the $(\ell-3)rd$ orthogonal polynomial
 over the reference interval $[-1,1]$ with respect to the weight function
 $x^2-1$.
 Similar as the construction in even order Crouzeix--Raviart elements,
 let $B:= b_T(\sum_{i=1}^3 \hat L_{\ell-3}(1-2\lambda_i)-\hat L_{\ell-3}(1))$.
 Obviously, $B$ vanishes on degrees of freedom \eqref{e:dof2d_nodal_edge}.
 A direct computation reveals that
 $$
 \partial_nB|_{e_i}= (\partial_n\lambda_i) b_{e_i}(\hat L_{\ell-3}(1-2\lambda_{i-1})+\hat L_{\ell-3}(1-2\lambda_{i+1})).
 $$ 
 Since $\ell-3$ is even, we have
 $\hat L_{\ell-3}(1-2\lambda_{i-1})|_{e_i}=\hat L_{\ell-3}(1-2\lambda_{i+1})|_{e_i}$,
 which implies that $B$ is a non-zero function whose normal derivative
 has vanishing edge moment up to the order $\ell-4$.
\end{remark}

Lemma \ref{uni2D} states that the functionals \eqref{e:dof2d_nodal_edge},
\eqref{e:dof2d_normalA}, \eqref{e:dof2d_volume} are linear independent
over $P_\ell(T)$
if $\ell$ is even.
In order to include the remaining three degrees of freedom
\eqref{e:dof2d_normalB}, we enlarge the space 
by three additional basis functions.
For any $i=1,2,3$ we set
$$
 \psi_i:=\lambda_{i+1}^{\ell-3}+\sum_{k=0}^{\ell-4}c_k\lambda_{i+1}^k \in P_{\ell-3}(T)
$$ 
where the coefficients $c_k$ are chosen such that such that $\psi_i$ satisfies
\begin{equation} 
  \int_{F_i}b^2_{F_i}\psi_iq_{\ell-4}\mathrm{d}s=0,
  \text{ for all } q_{\ell-4}\in P_{\ell-4}(F_i).
  \label{2Dbubble'}
\end{equation}
We then define bubble functions $\psi^B_i$, $i=1,2,3$ by
\begin{equation} 
\psi_i^B := b_Tb_{F_{i}}\psi_i
\label{2Dbubble}
\end{equation}
and define the shape function space as
\begin{equation}
	\mathcal{P}_{T,\ell} = P_{\ell}(T) + \mathrm{span}\{\psi_i^B : i=1,2,3\}.
\end{equation}  

\begin{mylem}
 If, $\ell\geq4$ is even,
  the degrees of freedom spanned by the functionals
  \eqref{e:dof2d_nodal_edge}--\eqref{e:dof2d_volume} are unisolvent for the space
  $\mathcal{P}_{T,\ell}$.
  \label{uni2Dgen}
\end{mylem}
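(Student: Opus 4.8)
\emph{Proof proposal.} The plan is to prove unisolvence along the standard route: first confirm that the number of functionals \eqref{e:dof2d_nodal_edge}--\eqref{e:dof2d_volume} equals $\dim\mathcal P_{T,\ell}=\dim P_\ell(T)+3$, and then show that the only $v_h\in\mathcal P_{T,\ell}$ annihilated by all of them is $v_h\equiv0$. A direct count gives $9+3(\ell-3)$ functionals from \eqref{e:dof2d_nodal_edge}, $3(\ell-3)$ from \eqref{e:dof2d_normalA}, three from \eqref{e:dof2d_normalB}, and $\binom{\ell-4}{2}$ from \eqref{e:dof2d_volume}, summing to $\tfrac12(\ell^2+3\ell+8)=\dim P_\ell(T)+3$. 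That the three bubbles $\psi_i^B$ of \eqref{2Dbubble} are linearly independent modulo $P_\ell(T)$, so that indeed $\dim\mathcal P_{T,\ell}=\dim P_\ell(T)+3$, I would obtain from a degree count on edge normal traces: since $\partial_n\psi_i^B$ vanishes on $F_j$ for $j\ne i$ and equals $(\partial_n\lambda_i)\,b_{F_i}^2\psi_i$ on $F_i$, a relation $\sum_i\beta_i\psi_i^B\in P_\ell(T)$ would force $\beta_j\,(\partial_n\lambda_j)\,b_{F_j}^2\psi_j|_{F_j}$, a polynomial of degree $\ell+1$ along $F_j$, to coincide with a normal trace of a polynomial of degree at most $\ell-1$ along $F_j$, whence $\beta_j=0$.

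The core of the argument is a decoupling (triangular) structure built into the bubbles. First I would record three facts about $\psi_i^B=b_Tb_{F_i}\psi_i=\lambda_i\lambda_{i-1}^2\lambda_{i+1}^2\psi_i$: it vanishes on $\partial T$; its normal derivative vanishes on $F_j$ for $j\ne i$ and equals $(\partial_n\lambda_i)\,b_{F_i}^2\psi_i$ on $F_i$; and, crucially, by the defining orthogonality \eqref{2Dbubble'} every functional \eqref{e:dof2d_normalA} annihilates each $\psi_i^B$. Thus the bubbles are invisible to \eqref{e:dof2d_nodal_edge} and \eqref{e:dof2d_normalA}, and these functionals ``see'' only the $P_\ell(T)$-part.

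Now let $v_h=w+\sum_i\alpha_i\psi_i^B$ with $w\in P_\ell(T)$ be annihilated by all functionals. Since the $\psi_i^B$ vanish on $\partial T$, the functionals \eqref{e:dof2d_nodal_edge} take the same values on $v_h$ as on $w$, and their vanishing forces $w|_{\partial T}=0$, so $w=b_Tg$ with $g\in P_{\ell-3}(T)$, exactly as in the proof of Lemma~\ref{uni2D}. Using the three facts above, the vanishing of \eqref{e:dof2d_normalA} reduces to $\fint_{F_j}b_{F_j}g\,q_{\ell-4}\,\mathrm ds=0$ for all $q_{\ell-4}\in P_{\ell-4}(F_j)$, so $g|_{F_j}$ is a multiple of the orthogonal polynomial $L_{\ell-3}^j$; as $\ell$ is even, $\ell-3$ is odd, and the oddness of $L_{\ell-3}^j$ together with continuity of $g$ at the vertices forces $g|_{\partial T}=0$, verbatim as in Lemma~\ref{uni2D}. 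With $g|_{\partial T}=0$ the trace $\partial_nw$ vanishes on every edge, leaving $\partial_n v_h|_{F_j}=\alpha_j(\partial_n\lambda_j)\,b_{F_j}^2\psi_j$; evaluating the remaining functional \eqref{e:dof2d_normalB} and replacing $\lambda_{j+1}^{\ell-3}$ by $\psi_j$ modulo $P_{\ell-4}(F_j)$ by means of \eqref{2Dbubble'} gives
$$
0=\alpha_j\,(\partial_n\lambda_j)\fint_{F_j}b_{F_j}^2\psi_j^2\,\mathrm ds,
$$
and since the weighted integral is strictly positive, $\alpha_j=0$ for every $j$. Hence $v_h=w\in P_\ell(T)$, all functionals \eqref{e:dof2d_nodal_edge}, \eqref{e:dof2d_normalA}, \eqref{e:dof2d_volume} vanish on $w$, and Lemma~\ref{uni2D} yields $w=0$, so $v_h\equiv0$.

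I expect the main obstacle to be setting up the decoupling cleanly: verifying that \eqref{2Dbubble'} renders the moment functionals \eqref{e:dof2d_normalA} blind to the bubbles while the three extra functionals \eqref{e:dof2d_normalB} detect precisely the coefficients $\alpha_j$ through the positive weighted norm $\fint_{F_j}b_{F_j}^2\psi_j^2\,\mathrm ds$. Once this triangular structure is established, the $P_\ell(T)$-part requires no new work beyond the even-order case already handled in Lemma~\ref{uni2D}.
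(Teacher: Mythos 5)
Your proposal is correct and follows essentially the same route as the paper: decompose $v_h$ into a $P_\ell(T)$ part and a bubble part, observe that the bubbles are annihilated by \eqref{e:dof2d_nodal_edge} and \eqref{e:dof2d_normalA} so that Lemma~\ref{uni2D} handles the polynomial part, and then use \eqref{e:dof2d_normalB} together with \eqref{2Dbubble'} to force the bubble coefficients to vanish. Your two additions --- the explicit verification that the sum $P_\ell(T)+\operatorname{span}\{\psi_i^B\}$ is direct, and the observation that the key integral equals the strictly positive quantity $\fint_{F_j}b_{F_j}^2\psi_j^2\,\mathrm{d}s$ --- make explicit what the paper leaves implicit, but do not change the argument.
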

\begin{proof}
Let $v_h\in \mathcal{P}_{T,\ell}$ vanish on degrees of freedom  \eqref{e:dof2d_nodal_edge}--\eqref{e:dof2d_volume}. We will show that $v_h\equiv 0.$
By definition of $\mathcal{P}_{T,\ell}$,
the function $v_h$ can be decomposed as
$$
 v_h= q_\ell +  \sum_{k=1}^{3}a_k\psi^B_k
$$
with some $q_\ell\in P_\ell(T)$ and real coefficients $a_1,a_2,a_3$.
Since the functions $\psi_k^B$ are zero on degrees of freedom \eqref{e:dof2d_nodal_edge},
	\eqref{e:dof2d_normalA},
	so must be the function $q_\ell$.
By Lemma~\ref{uni2D}, thus there exists a polynomial $q_{\ell-6}\in P_{\ell-6}(T)$
(zero if $\ell<6$) such that $q_\ell=b_T^2 q_{\ell-6}$.

Since the normal derivative of $q_\ell$ is zero over $\partial T$,
the vanishing of degrees of freedom \eqref{e:dof2d_normalB}
and elementary computations imply that
$$
  0 = \fint_{F_i} \lambda_{i+1}^{\ell-3}
       \partial_n \left(\sum_{k=1}^{3}a_k\psi^B_k\right)
      \mathrm{d}s
    =
    a_i (\partial_n\lambda_{i}) \fint_{F_i}\lambda_{i+1}^{\ell-3}b_{F_i}^2\psi_i\mathrm{d}s
    \quad\text{for }i=1,2,3.
$$
It follows from the definition of $\psi_i$ that the integral on the right-hand
side is nonzero, whence $a_i=0$ for $i=1,2,3$.
Therefore we have $v_h=q_\ell=b_T^2 q_{\ell-6}$.
The fact that $v_h$ vanishes on degrees of freedom \eqref{e:dof2d_volume}
finally implies that $q_{\ell-6}=0$ and thus $v_h=0$.
\end{proof}

We define the space of functions over $\Omega$ whose restriction to
any $T\in\mathcal T$ belongs to $\mathcal{P}_{T,\ell}$
by $\mathcal{P}_{\mathcal T,\ell}$
The global finite element space $V^{\ell}_{\mathcal T}$ defined as 
$$
\begin{aligned}
	V^{(\ell)}_{\mathcal T} := 
	H^1_0(\Omega)\cap 
	\{&
	v\in \mathcal{P}_{\mathcal T,\ell}: \nabla v \text{ is continuous at all vertices and}
	\\
	&
	\int_F [\partial_n v]q_{\ell-3}\,ds = 0
	\text{ for all interior faces } F \text{ and all } q_{\ell-3}\in P_{\ell-3}(F)
	\}
	.
\end{aligned}
$$

The crucial property of functions $v_h\in V^{(\ell)}_{\mathcal T}$ is
that the normal derivative of $v_h$ has some extra continuity in terms
of moments of degree one higher than required by the definition.
This implies that the finite element 
$(\Sigma_{T,\ell},\mathcal{P}_{T,\ell},\mathcal{T})$
satisfies the strong discrete Miranda--Talenti property \eqref{dMT}.

\begin{mylem}
	Let $v_h\in V^{(\ell)}_{\mathcal T}$ with $\ell\geq 4$ even.
	For any interior edge $F$, $v_h$ satisfies
	\begin{equation*}
		\int_F [\partial_nv_h]q_{\ell-2}\mathrm{d}s = 0
		\quad\text{for all } q_{\ell-2}\in P_{\ell-2}(F).
	\end{equation*}
	\label{2Dsuper}  
\end{mylem}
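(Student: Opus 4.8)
The plan is to exploit an odd/even parity symmetry on each edge together with the vertex continuity of the gradient. Throughout, fix an interior edge $F$ shared by $T_+$ and $T_-$, parametrise $F$ by the affine coordinate $t\in[-1,1]$ sending the midpoint of $F$ to $0$ and the endpoints to $\pm1$, and call a function on $F$ \emph{even} or \emph{odd} according to its behaviour under $t\mapsto-t$; this notion is intrinsic to $F$ since flipping the sign of $t$ preserves parity. Writing $v_h|_{T_+}=q_\ell^++\sum_k a_k^+\psi_k^B$ with $q_\ell^+\in P_\ell(T_+)$ and the bubbles $\psi_k^B=b_Tb_{F_k}\psi_k$ of $T_+$, I note that $\psi_k^B$ vanishes to second order on every edge other than $F_k$, so only the term $k=i$ with $F=F_i$ contributes to the normal derivative on $F$, with $\partial_n\psi_i^B|_{F}=(\partial_n\lambda_i)\,b_{F}^2\,\psi_i|_F$. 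Hence, on $F$,
\[
\partial_n v_h|_{T_+}=\underbrace{\partial_n q_\ell^+|_F}_{\deg\le\ell-1}+a_i^+(\partial_n\lambda_i)\,b_F^2\,\psi_i|_F,
\]
and analogously from $T_-$; here $b_F$ denotes the edge bubble on $F$, which restricted to $F$ is (up to a positive constant) $1-t^2$ and hence even.

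The key observation is that the bubble contribution is \emph{odd}. Indeed $\psi_i|_F$ has degree $\ell-3$ and, by the defining property \eqref{2Dbubble'}, is orthogonal over $F$ to $P_{\ell-4}(F)$ with respect to the even weight $b_F^2$. This characterises $\psi_i|_F$, up to scaling, as the degree-$(\ell-3)$ orthogonal polynomial for that weight; since the weight is even and $\ell-3$ is odd, $\psi_i|_F$ is an odd function of $t$. Consequently $b_F^2\psi_i|_F$ is odd from either side, so the bubble part of the jump $[\partial_n v_h]|_F$ is odd. Splitting $[\partial_n v_h]|_F=J_{\mathrm{even}}+J_{\mathrm{odd}}$ into its even and odd parts, I conclude that $J_{\mathrm{even}}$ is exactly the even part of the polynomial contribution $P:=(\partial_n q_\ell^+-\partial_n q_\ell^-)|_F$, which has degree $\le\ell-1$; thus $\deg J_{\mathrm{even}}\le\ell-2$.

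It then remains to show $J_{\mathrm{even}}=0$: once this holds, the jump is odd, hence $L^2(F)$-orthogonal to every even polynomial, and since $P_{\ell-2}(F)=P_{\ell-3}(F)\oplus\mathrm{span}\{t^{\ell-2}\}$ with $t^{\ell-2}$ even, the assertion follows from the orthogonality to $P_{\ell-3}(F)$ already built into $V^{(\ell)}_{\mathcal T}$. To obtain $J_{\mathrm{even}}=0$ I would first use the vertex continuity of $\nabla v_h$: at both endpoints of $F$ the normal derivatives from the two sides agree, so $[\partial_n v_h]$ vanishes at $t=\pm1$, and by parity this forces $J_{\mathrm{even}}(\pm1)=0$; thus $J_{\mathrm{even}}=b_F\,h$ with $h$ even of degree $\le\ell-4$. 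Finally, testing the defining moment condition $\int_F[\partial_n v_h]\,q\,\mathrm ds=0$ against even $q\in P_{\ell-4}(F)\subset P_{\ell-3}(F)$ (the odd part $J_{\mathrm{odd}}$ drops out by parity) gives $\int_F b_F\,h\,q\,\mathrm ds=0$ for all even $q$ of degree $\le\ell-4$; choosing $q=h$ and using $b_F>0$ in the interior of $F$ yields $\int_F b_F h^2\,\mathrm ds=0$, hence $h=0$ and $J_{\mathrm{even}}=0$.

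The main obstacle is the parity step: the whole argument hinges on the bubble part being purely odd, which is what keeps the even part of the jump at degree $\le\ell-2$ and ultimately allows it to be annihilated by self-testing. This rests on identifying $\psi_i|_F$ with an odd orthogonal polynomial through \eqref{2Dbubble'}, and on the vertex continuity, which is precisely the ingredient that lowers the effective degree of $J_{\mathrm{even}}$ into the range controlled by the prescribed $P_{\ell-3}$ moments. The remaining ingredients (the formula for $\partial_n\psi_i^B|_F$ and the even/odd bookkeeping) are routine.
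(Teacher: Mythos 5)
Your proof is correct, and it rests on the same two key facts as the paper's argument --- that $\psi_i|_F$ is (a multiple of) the odd degree-$(\ell-3)$ orthogonal polynomial for the even weight $b_F^2$, so the bubble contribution to the normal-derivative jump is odd, and that vertex continuity of $\nabla v_h$ forces the polynomial contribution to the jump to vanish at the endpoints of $F$ --- but you organize them genuinely differently. The paper reduces the claim to a single test function $\varphi_F\in P_{\ell-2}(F)\setminus P_{\ell-3}(F)$ chosen orthogonal to $P_{\ell-3}(F)$ with respect to the weight $b_F$; this $\varphi_F$ is automatically even, and it annihilates the polynomial part of the jump (which equals $b_F q_{\ell-3}$) via the defining orthogonality, and the bubble part (which equals $b_F^2$ times an odd polynomial) via parity, so no further work is needed. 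You instead decompose the jump itself into even and odd parts, observe that the even part $J_{\mathrm{even}}=b_F h$ with $h$ even of degree at most $\ell-4$, kill it by the self-testing choice $q=h$ inside the prescribed $P_{\ell-3}(F)$ moment conditions, and conclude that $[\partial_n v_h]|_F$ is an odd function, hence orthogonal to the even monomial $t^{\ell-2}$ that spans $P_{\ell-2}(F)$ modulo $P_{\ell-3}(F)$. Your route yields the slightly stronger structural statement that the jump is odd (so orthogonal to every even polynomial, of any degree), at the cost of the extra step establishing $J_{\mathrm{even}}=0$; the paper's choice of $\varphi_F$ sidesteps that step entirely. Both arguments are complete and of comparable difficulty.
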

\begin{proof} 
    Let $F$ be an interior edge shared by elements $T_+$, $T_-$.
    In view of the degrees of freedom,
    it suffices to prove that
    $$
		\int_F [\partial_nv_h]\varphi_F\mathrm{d}s = 0
    $$
    for some $\varphi_F\in P_{\ell-2}(F)$ that is linear independent
    of all elements from $P_{\ell-3}(F)$.
    To this end, we
    choose $\varphi_F\in P_{\ell-2}(F)$ to be some nonzero function satisfying
	\begin{equation}
    \int_{F}b_F\varphi_Fq_{\ell-3}\mathrm{d}s=0
    \quad\text{for all } q_{\ell-3}\in P_{\ell-3}(F)
    .
    \label{temp1}	
    \end{equation}
	Any function $v_h\in V_{\mathcal T}$ can be expressed as 
	$v_h|_{T_\pm} = p_{\ell,\pm}+\psi_{\pm}^B$,
	where $p_{\ell,\pm}\in P_{\ell}(T_\pm)$ 
	and 
	$\psi_{\pm}^B\in \mathrm{span}\{\psi^B_i(T_\pm),i=1,2,3\}. $ 
	 Then
	\begin{align}\label{e:super_split}
		\int_{F}[\partial_n v_h]\varphi_F\mathrm{d}s
		=\int_{F}(\partial_np_{\ell,+}-\partial_np_{\ell,-})\varphi_F\mathrm{d}s+\int_{F}(\partial_n\psi^B_{\ell,+}-\partial_n\psi^B_{\ell,-})\varphi_F\mathrm{d}s.
	\end{align}
	We consider the first term on the right-hand side.
	Since $\nabla v_h$ continuous on all vertices and $\nabla\psi^B_{\pm}$ vanishes
	there,
	there exists some $q_{\ell-3}\in P_{\ell-3}(F)$ such that
	$(\partial_np_{\ell,+}-\partial_np_{\ell,-})$
	equals $b_Fq_{\ell-3}$. Relation \eqref{temp1} thus implies
	\begin{align*}
		\int_{F}(\partial_np_{\ell,+}-\partial_np_{\ell,-})\varphi_F\mathrm{d}s =
		\int_{F}b_Fq_{\ell-3}\varphi_F\mathrm{d}s = 0.
	\end{align*}
	For the second term of \eqref{e:super_split},
	by the definition of $\psi_i^B$, there exists a function $\psi_F\in P_{\ell-3}(F)$, such that
	$$
	\int_{F}(\partial_n\psi^B_{+}-\partial_n\psi^B_{-})\varphi_F\mathrm{d}s
	=\int_F b_F^2\psi_F\varphi_F\mathrm{d}s.
	$$
	According to \eqref{2Dbubble'}, ${\psi}_F$ satisfies
	$$
	\int_F b_F^2 \psi_F q_{\ell-4}\mathrm{d}s=0
	\quad\text{for all } q_{\ell-4}\in P_{\ell-4}(F).
	$$
	This shows that $\psi_F$ is an odd function.
	On the other hand $\varphi_F$ satisfies 
    \eqref{temp1} and therefore $\varphi_F$ is an even function.
    Thus,
	$$
	\int_F b_F^2 \psi_F \varphi_F\mathrm{d}s 
	=
	0.
	$$
	Hence, the right-hand side of \eqref{e:super_split} is zero
	and the assertion is proven.
\end{proof}

\begin{corollary}
 The spaces $V_{\mathcal T}^{(\ell)}$, with $\ell\geq 4$ even, satisfy the discrete
 Miranda--Talenti property from Definition~\ref{def:dmt}.
\end{corollary}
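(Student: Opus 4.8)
The plan is to reduce the claimed inequality \eqref{dMT} to the vanishing of the boundary sum appearing in Lemma~\ref{DMT1}, exactly as in the proof of Lemma~\ref{Specht}, and then to dispose of that sum by invoking the enhanced moment continuity established in Lemma~\ref{2Dsuper}. First I would check that $V_{\mathcal T}^{(\ell)}$ meets the hypotheses of Lemma~\ref{DMT1}: its members lie in $H^1_0(\Omega)\subseteq H^2(\mathcal T)$, are globally $C^0$, and have $\nabla v_h$ continuous at all vertices, i.e.\ $C^1$ continuity across the $0$-dimensional hyperfaces that are the relevant $(d-2)$-dimensional objects when $d=2$. Lemma~\ref{DMT1} then yields
\begin{equation*}
\|\triangle_{\mathcal T}v_h\|_0^2 = \|D^2_{\mathcal T}v_h\|_0^2+2\sum_{F\in \mathcal{F}}([\partial_n v_h],\partial^2_{t}v_h)_{F},
\end{equation*}
so it suffices to prove that every summand $([\partial_n v_h],\partial^2_{t}v_h)_F$ vanishes; this in fact delivers the stronger identity rather than merely the inequality.

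For the boundary faces the argument is immediate: if $F\subset\partial\Omega$, then $v_h\in H^1_0(\Omega)$ forces $v_h|_F=0$, whence its tangential derivatives along $F$ vanish and in particular $\partial^2_t v_h|_F=0$, killing the corresponding term. The substance lies in the interior faces, where I would first record the degree of the integrand. Because the bubble functions $\psi_i^B=b_Tb_{F_i}\psi_i$ contain the factor $b_T$, they vanish on the whole of $\partial T$, so their traces on any edge are identically zero and they do not contribute to $v_h|_F$. Consequently $v_h|_F$ is, from either side, the trace of the $P_\ell(T)$-component, a univariate polynomial of degree at most $\ell$ along $F$; differentiating twice tangentially shows $\partial^2_t v_h|_F\in P_{\ell-2}(F)$. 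Since $v_h$ is $C^0$, this trace is single-valued and $\partial^2_t v_h|_F$ is unambiguous.

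With this in hand the interior terms vanish by a direct appeal to Lemma~\ref{2Dsuper}: taking $q_{\ell-2}:=\partial^2_t v_h|_F\in P_{\ell-2}(F)$ gives $([\partial_n v_h],\partial^2_t v_h)_F=\int_F[\partial_n v_h]q_{\ell-2}\,\mathrm{d}s=0$ for every interior edge. Combining the interior and boundary contributions collapses the boundary sum to zero, leaving $\|\triangle_{\mathcal T}v_h\|_0^2=\|D^2_{\mathcal T}v_h\|_0^2$, which a fortiori implies \eqref{dMT}. I expect the only real obstacle to be the bookkeeping of polynomial degrees: the tangential second derivative has degree exactly $\ell-2$, one higher than the degree $\ell-3$ enforced by the defining degrees of freedom of $V_{\mathcal T}^{(\ell)}$, so the moment conditions built into the space are by themselves just short of what is needed. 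It is precisely the extra order of moment continuity supplied by Lemma~\ref{2Dsuper} that bridges this gap, and recognising this degree match as the crux --- rather than attempting any additional estimate --- is what keeps the proof short.
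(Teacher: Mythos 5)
Your argument is correct and follows essentially the same route as the paper: the paper's proof is exactly the observation that $\partial^2_{t_F}v_h\in P_{\ell-2}(F)$ combined with Lemma~\ref{DMT1} and Lemma~\ref{2Dsuper}. You merely spell out the details the paper leaves implicit (the bubbles vanishing on $\partial T$ so that the trace has degree $\ell$, and the boundary edges being handled by $v_h|_{\partial\Omega}=0$), which is fine.
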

\begin{proof}
 Any $v_h\in V_{\mathcal T}^{(\ell)} $ satisfies $\partial^2_{t_F}v_h\in P_{\ell-2}(F)$. 
 Combining Lemma~\ref{DMT1} and Lemma~\ref{2Dsuper} 
 thus yields the proof.  
\end{proof}

\subsection{Example for $\ell=4$}
As an illustration, we give details on the 
the new finite element from Section~\ref{section2D}
for the case $\ell=4$.
On a triangle $T$, the shape function space reads
$$
\mathcal{P}_{T,4} = P_4(T)+\mathrm{span}\{b_Tb_{F_i}(\lambda_i-1/2):i=1,2,3\}.
$$
The degrees of freedom are the evaluation of the 
function and its gradient at the three vertices, the averages of the 
function over the three edges, and the zeroth and first order moments
of the normal derivative over the three edges.
An illustration is displayed in Figure~\ref{fig:2d_illustration}.
The global finite element space consists of all globally continuous
functions with homogeneous Dirichlet boundary values
such that the gradient is continuous in all vertices and the 
normal jumps over all interior edges have vanishing zeroth and 
first-order moments.
Lemma~\ref{2Dsuper} states that, additionally, the normal jumps
of such functions across interior edges automatically vanish
when integrated against quadratic polynomials.
One implication of this property is, as mentioned earlier, the validity
of the discrete Miranda--Talenti property.

A conforming finite element of the same order is the Bell element \cite{Bell1969A},
which also has 18 degrees of freedom and has third order convergence
and is thus comparable to this element with $\ell =4$.
A practical advantage of our nonconforming method is that the degrees of freedom
only involve first-order derivatives as degrees of freedom at the vertices,
which simplifies the implementation of boundary conditions.

\begin{figure}
 \begin{tikzpicture}[scale=2.2]
 \draw (0,0)--(1,0)--(.5,.8)--cycle;
 \foreach \x/\y  in {0/0,1/0,.5/.8}
      {  \fill (\x,\y) circle (1pt);
         \draw (\x,\y) circle (2pt);}
 \foreach \a/\b/\c/\d  in {.35/0/.35/-.2,
                           .65/0/.65/-.2,  
                     .825/.28/.975/.38,
                     .675/.52/.825/.62,
                     .175/.28/.025/.38,
                     .325/.52/.175/.62}
      {\draw[->,thick] (\a,\b)--(\c,\d);}
 \draw (.17,.05)--(.83,.05)
       (.9,.06)--(.55,.62)
       (.1,.06)--(.45,.62);
\end{tikzpicture}
\qquad\qquad
\begin{tikzpicture}[scale=2.2]
 \draw (0,0)--(1,0)--(.5,.8)--cycle;
 \foreach \x/\y  in {0/0,1/0,.5/.8}
      {  \fill (\x,\y) circle (1pt);
         \draw (\x,\y) circle (2pt);}
 \foreach \a/\b/\c/\d  in {.5/0/.5/-.2,
                     .75/.4/.9/.5,
                      .25/.4/.1/.5}
      {\draw[->,thick] (\a,\b)--(\c,\d);}
 \fill (.5,.2667) circle (1pt);
\end{tikzpicture}
\caption{Mnemonic diagram of the two-dimensional finite element for $\ell=4$
         (left) and $\ell=3$ (right).
         \label{fig:2d_illustration}}
\end{figure}

\subsection{Example for $\ell=3$}

The finite element from Section~\ref{section2D} is defined for even values
of $\ell\geq 4$.
In the low-order case, we can define an analogous element for the odd value
$\ell=3$.
It can be viewed as an alternative to the second-order Specht element.
Our element offers a simpler shape function space,
at the expense of one additional interior degree of freedom.
The construction is analogous to that of Section~\ref{section2D}.
The shape function space reads
\begin{equation*}
	\mathcal{P}_{T,3} = P_{3}(T)+\mathrm{span}\{b_Tb_{F_i}:i=1,2,3\}.
\end{equation*}
The degrees of freedom
(illustrated in Figure~\ref{fig:2d_illustration}) are the evaluation of the 
function and its gradient at the three vertices, the averages of the 
normal derivative over the three edges, and the volume average.
We denote this set of linear functionals as by $\Sigma_{T,3}$.

\begin{mylem}
	The shape function space $\mathcal{P}_{T,3}$ is unisolvent 
    by the degrees of freedom  $	\Sigma_{T,3}$.
\end{mylem}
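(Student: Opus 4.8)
The plan is to argue by a dimension count followed by injectivity. First I would check that the three bubbles $b_Tb_{F_i}$ are linearly independent modulo $P_3(T)$: each equals the degree-five monomial $\lambda_1\lambda_2\lambda_3\lambda_{i-1}\lambda_{i+1}$, and the three such monomials are distinct and lie outside $P_3(T)$, so that $\dim\mathcal{P}_{T,3}=\dim P_3(T)+3=13$. This matches the cardinality of $\Sigma_{T,3}$ (three vertex values, six vertex gradient components, three edge normal averages, one volume average). Hence unisolvence is equivalent to injectivity, and it suffices to take $v_h\in\mathcal{P}_{T,3}$ annihilated by every functional of $\Sigma_{T,3}$ and to prove $v_h\equiv0$. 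I would write $v_h=p_3+\sum_{i=1}^3 a_i b_Tb_{F_i}$ with $p_3\in P_3(T)$.

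Next I would exploit that each bubble $b_Tb_{F_i}$ vanishes to second order at every vertex, so that its value and gradient vanish there; consequently the nine vertex functionals see only $p_3$, forcing $p_3(p_j)=0$ and $\nabla p_3(p_j)=0$ for $j=1,2,3$. As for the cubic Hermite element, a cubic subject to these nine conditions lies in the one-dimensional space spanned by $b_T=\lambda_1\lambda_2\lambda_3$, so $p_3=c\,b_T$ for some constant $c$ and $v_h=b_T\big(c+\sum_i a_i b_{F_i}\big)$.

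Then I would use the identity $b_T=\lambda_i b_{F_i}$ to evaluate the edge normal derivatives. On $F_i$ one has $\lambda_i=0$, hence $\partial_n v_h|_{F_i}=\partial_n\lambda_i\,b_{F_i}\big(c+\sum_j a_j b_{F_j}\big)|_{F_i}$, and since $b_{F_j}|_{F_i}=0$ for $j\neq i$ only the term $a_i b_{F_i}$ survives. Setting $\beta_0:=\fint_{F_i}b_{F_i}\,\mathrm{d}s$ and $\beta_1:=\fint_{F_i}b_{F_i}^2\,\mathrm{d}s$, the vanishing of the three edge functionals reads $\partial_n\lambda_i(c\beta_0+a_i\beta_1)=0$; as $\partial_n\lambda_i\neq0$ and $\beta_1\neq0$, this gives $a_i=-c\beta_0/\beta_1$ for each $i$. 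The volume functional then yields $c\,\gamma_0+\gamma_1\sum_i a_i=0$ with $\gamma_0:=\fint_T b_T\,\mathrm{d}x$ and $\gamma_1:=\fint_T b_Tb_{F_i}\,\mathrm{d}x$, where the latter is independent of $i$ by the affine invariance of these normalised barycentric averages. Substituting the relation for $a_i$ turns this into $c(\beta_1\gamma_0-3\beta_0\gamma_1)/\beta_1=0$.

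The main obstacle, and the only genuinely non-routine point, is to verify that $\beta_1\gamma_0-3\beta_0\gamma_1\neq0$, since a cancellation here would break unisolvence. I would settle this with the standard barycentric integration formula, which gives $\beta_0=1/6$, $\beta_1=1/30$, $\gamma_0=1/60$ and $\gamma_1=1/630$, so that $\beta_1\gamma_0=1/1800\neq1/1260=3\beta_0\gamma_1$. It follows that $c=0$, hence $a_i=0$ for all $i$, and therefore $v_h\equiv0$, which establishes injectivity and completes the proof.
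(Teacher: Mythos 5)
Your proof is correct and follows essentially the same route as the paper: reduce $v_h$ to $c\,b_T+\sum_i a_i b_T b_{F_i}$ via the vertex (Hermite-type) conditions, use the edge normal averages to obtain $a_i=-5c$, and use the volume average with the values $\fint_T b_T\,\mathrm{d}x=1/60$ and $\fint_T b_T b_{F_i}\,\mathrm{d}x=1/630$ to conclude $c=0$. Your version merely spells out the dimension count and the restriction $\partial_n v_h|_{F_i}=\partial_n\lambda_i\,b_{F_i}(c+a_i b_{F_i})$ more explicitly than the paper does.
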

\begin{proof}
   The functions
	Since $b_Tb_{F_i}$ and their gradients vanish at the vertices of 
    the triangle $T$.
    An argument along the lines of Lemma~\ref{uni2Dgen} therefore shows 
    that any function $v_h$ from $\mathcal{P}_{T,3}$ vanishing at all degrees 
    of freedom is the sum of a multiple of the volume bubble $b_T$
    and a linear combination of the functions $b_Tb_{F_i}$.
    Therefore there are real coefficients $c,a_1,a_2,a_3$ such that
    $$
     v_h = c\, b_T + \sum_{i=1}^3 a_i b_Tb_{F_i} .
    $$
    Since the degrees of freedom related to the normal derivative 
    vanish, an elementary computation reveals that
    $a_1=a_2=a_3$ and 
    $a_i=(\fint_{F_i}b_{F_i}\mathrm{d}s/\fint_{F_i}b^2_{F_i}\mathrm{d}s) c=-5c$.
    On the other hand, the volume average of $b_T$ equals $1/60$
    and the volume average of $b_Tb_{F_i}$ equals $1/630$.
    The constraint $\fint_T v_h\mathrm{d}x=0$ therefore implies
    $c=0$ and thus $v_h=0$.    
\end{proof}

The global finite element space for the the case $\ell=3$ is defined as
\begin{align*}
	V^{(3)}_{\mathcal T}=
	 H^1_0(\Omega)\cap 
	 \{ v \in \mathcal{P}_{\mathcal T,3} :
	 & \nabla v \text{ is continuous 
	 at all vertices and} &\\
	&\int_F [\partial_{n} v] \mathrm{d}s=0\text{ for any interior edge } F
	\}.
\end{align*}
Here, as in prior sections, the space $\mathcal{P}_{\mathcal T,3}$ consists of
functions that piecewise belong to $\mathcal{P}_{T,3}$.
A proof analogous to that of Lemma~\ref{2Dsuper}
shows that the finite element spaces enjoys a higher-order normal
continuity.
\begin{mylem}
		Any function $v_h\in V^{(3)}_{\mathcal T}$ satisfies for all interior edges $F$
		that
	\begin{equation*}
		\int_F [\partial_nv_h]q_1\mathrm{d}s = 0
		\quad\text{for all } q_1\in P_1(F).
	\end{equation*}  

\end{mylem}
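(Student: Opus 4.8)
The plan is to follow the proof of Lemma~\ref{2Dsuper} verbatim, specialised to $\ell=3$. Fix an interior edge $F=F_i$ shared by two elements $T_+,T_-$. By the defining constraint of $V^{(3)}_{\mathcal T}$ we already know $\int_F[\partial_nv_h]\,\mathrm{d}s=0$, that is, moment continuity against all of $P_0(F)$. Since $P_1(F)=P_0(F)\oplus\mathrm{span}\{\varphi_F\}$ for any linear function $\varphi_F$ independent of the constants, it suffices to prove $\int_F[\partial_nv_h]\varphi_F\,\mathrm{d}s=0$ for a single such $\varphi_F$. I would choose $\varphi_F\in P_1(F)$ to be the nonzero linear function satisfying $\int_F b_F\varphi_F\,\mathrm{d}s=0$; because $b_F$ is even about the edge midpoint, this $\varphi_F$ is the degree-one orthogonal polynomial for the weight $b_F$ and is therefore \emph{odd} (a multiple of $\lambda_{i+1}-1/2$).

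Next I would split $v_h|_{T_\pm}=p_{3,\pm}+\psi^B_\pm$ with $p_{3,\pm}\in P_3(T_\pm)$ and $\psi^B_\pm\in\mathrm{span}\{b_Tb_{F_j}:j=1,2,3\}$, and write $\int_F[\partial_nv_h]\varphi_F\,\mathrm{d}s$ as the sum of a polynomial contribution and a bubble contribution. For the polynomial contribution I use that $\nabla v_h$ is continuous at the vertices while the gradients of the bubbles $b_Tb_{F_j}$ vanish there; hence the jump $\partial_np_{3,+}-\partial_np_{3,-}$, which lies in $P_2(F)$, vanishes at both endpoints of $F$ and is consequently a constant multiple of $b_F$. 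The defining property $\int_F b_F\varphi_F\,\mathrm{d}s=0$ then annihilates this term.

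For the bubble contribution I would record the elementary identity $\partial_n(b_Tb_{F_i})|_{F_i}=(\partial_n\lambda_i)\,b_{F_i}^2$ (noting that the bubbles $b_Tb_{F_j}$ with $j\neq i$ have vanishing normal derivative on $F_i$), which shows that the normal-derivative jump of $\psi^B_\pm$ across $F$ equals $b_F^2$ times a constant. This factor is even, whereas $\varphi_F$ is odd, so the integral vanishes by parity over the symmetric edge. Combining the two vanishing contributions gives the claim. The only point requiring care — exactly as in Lemma~\ref{2Dsuper} — is the parity bookkeeping: here, because $\ell-2=1$ is odd, the orthogonal test function $\varphi_F$ is odd rather than even, so one must verify that the bubble term carries the complementary (even) parity; this is precisely what the identity $\partial_n(b_Tb_{F_i})|_{F_i}=(\partial_n\lambda_i)b_{F_i}^2$ guarantees.
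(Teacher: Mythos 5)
Your proof is correct and rests on the same key facts as the paper's: the jump $[\partial_n v_h]|_F$ is a linear combination of $b_F$ and $b_F^2$ (the former from the $P_3$ part via gradient continuity at the vertices, the latter from the identity $\partial_n(b_Tb_{F_i})|_{F_i}=(\partial_n\lambda_i)\,b_{F_i}^2$), and its zeroth moment vanishes by the defining constraint of $V^{(3)}_{\mathcal T}$. The only difference is bookkeeping: the paper writes the jump as $\alpha b_F(1+\beta b_F)$, solves $\beta=-5$ from the zeroth-moment condition, and directly evaluates $\fint_F\lambda_e b_e(1-5b_e)\,\mathrm{d}s=0$, whereas you decompose $P_1(F)$ into constants plus an odd linear function and dispatch the latter by parity — both routes are valid and essentially identical.
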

\begin{proof}
	By the definition of the shape function space
	$\mathcal{P}_{T,3}$,
	there exist constants $\alpha, \beta$ such that 
	$[\partial_nv_h]|_{F}=\alpha b_e(1+\beta b_e)$.
	Since $\int_{F}[\partial_nv_h]\mathrm{d}s=0$ by the continuity conditions
	on $V_{\mathcal T}^{(3)}$, we have $0=\fint_F b_e(1+\beta b_e)\mathrm{d}s = 1/6+\beta/30$.
	This implies $\beta = -5$ and we
	directly compute 
	$\fint_{F}\lambda_e[\partial_nv_h]\mathrm{d}s=\alpha\fint_{F}\lambda_eb_e(1-5b_e)\mathrm{d}s =0. $
\end{proof}

By Lemma~\ref{DMT1}, the space $V^{(3)}_{\mathcal T}$ then satisfies the discrete
Miranda--Talenti property.

\section{Design of the finite element on three dimensions}\label{s:3d}
The construction of piecewise polynomial finite element functions 
in three dimensions
with $C^1$ continuity across edges requires the use of polynomials
of degree at least 5.
Analogous to the two-dimensional case, our shape functions
consists of two parts,
namely functions from the space $P_\ell(T)$ for polynomials of degree
$\ell\geq 5$ with respect to a simplex $T$,
plus carefully chosen
particular $H_0^1(T)$ bubble functions.
In view of the $C^1$ continuity at the edges,
the degrees of freedom related to the normal derivatives on faces
are the moments of order up to $\ell-4$, which is less than in
the two-dimensional case.

\subsection{Notation}
We begin by introducing some notation.
Throughout this section, $\mathcal T$ is a regular simplicial partition
of the open, bounded, connected 
polytopal Lipschitz domain $\Omega\subseteq\mathbb R^3$.
We denote by $\mathcal{N}, \mathcal{E},\mathcal{F}$ the sets
of vertices, edges, faces, respectively. 
Given a simplex $T\in\mathcal T$, the sets
$\mathcal{N}(T), \mathcal{E}(T),\mathcal{F}(T)$ are the sets of vertices,
edges, and faces of $T$.
We assign every edge $e\in\mathcal E$ with two fixed 
linear independent unit normals $n_1,n_2$.
On a simplex $T$ with vertices $a_1,a_2,a_3,a_4$,
the four barycentric coordinates are denoted by
$\lambda_1,\lambda_2,\lambda_3,\lambda_4$.
Given an edge $e$ with endpoints $a_i$, $a_j$,
the quadratic edge bubble function $b_e$ is defined as
$b_e:=\lambda_i\lambda_j$.
Given a face $F$ with vertices $a_i$, $a_j$, $a_k$,
the cubic face bubble function $b_F$ is defined as
$b_F:=\lambda_i\lambda_j\lambda_k$.
The quartic volume bubble is defined as 
$b_T:=\lambda_1\lambda_2\lambda_3\lambda_4$.

\subsection{Degrees of freedom}
Given any simplex $T\in\mathcal T$,
we define degrees of freedom (linear functionals) 
mapping any $v\in C^\infty(\mathbb R^3)$ to
\begin{subequations}
	\begin{align}
		\label{e:dof3d_nodal}
		v(p),~\nabla v(p), 
		D^2 v(p)&\quad \text{ for any }p\in \mathcal{N}(T)\\
			\label{e:dof3d_edge}
		\fint_{e}v q_{\ell-6}\mathrm{d}s~
		&\quad\text{ for any }q_{\ell-6}\in P_{\ell-6}(e),~e\in\mathcal{E}(T)\\
		\label{e:dof3d_edge_normal}
			\fint_{e}\partial_{n_j}v q_{\ell-5}\mathrm{d}s
			&\quad\text{ for any } q_{\ell-5}\in P_{\ell-5}(e),~e\in\mathcal{E}(T),~ j =1,2\\
		\label{F:dof3d_normalA}
		\fint_{F}\partial_nv q_{\ell-4}\mathrm{d}S 
        &\quad
		\text{ for any }q_{\ell-4}\in P_{\ell-4}(F),~F\in \mathcal{F}(T)\\
		\label{F:dof3d_normalB}
		\fint_{F}v q_{\ell-6}\mathrm{d}S 
		&\quad
		\text{ for any }q_{\ell-6}\in P_{\ell-6}(F),~F\in \mathcal{F}(T)
		\\
		\label{e:dof3d_volume}
		\fint_T v q_{\ell-4} \mathrm{d}x , 
		&\quad 
		\text{ for any }q_{\ell-4}\in P_{\ell-4}(T)
		.
	\end{align}
\end{subequations}
We use the convention that \eqref{e:dof3d_edge} and \eqref{F:dof3d_normalB}
are void if $\ell=5$.

\subsection{Shape functions}

	From the definition of degrees of freedom \eqref{e:dof3d_nodal}--\eqref{e:dof3d_volume}, 
	the normal derivative has $(\ell-4)$th order moment continuity
	in the sense of \S\ref{ss:meshes}.
	Our selection for of $H_0^1(T)$ bubble functions are functions
	orthogonal to particular $(\ell-2)$nd order polynomials 
	$\varphi_1,\dots,\varphi_{2\ell-3}$ on each face. 
	The latter polynomials are defined as follows.
Let $\ell\geq 5$ be a fixed integer and let $F\in\mathcal F(T)$ be a face
of the simplex $T\in\mathcal T$.
We abbreviate
$$
N_{\ell-4}:=\dim P_{\ell-4}(F).
$$
Let 
$(\phi_1,\dots,\phi_{N_{\ell-4}})$
be a given basis of $P_{\ell-4}(F)$, which
we extend to a basis of $P_{\ell-2}(F)$ 
by adding suitable polynomials $\varphi_1,\dots,\varphi_{2\ell-3}$
of degree $\ell-2$ satisfying the orthogonality relation
\begin{equation}
\int_{F}b_{F} q_{\ell-4} \varphi_{i}\mathrm{d}S=0
\quad\text{for all }q_{\ell-4}\in P_{\ell-4}(F)
\text{ and all } i=1,\dots,2\ell-3.
\label{express1}
\end{equation}
Next we will define $N_{\ell-4}$ bubble functions that
vanish on $\partial T$.
Let $X_F$ denote the subspace of $P_{\ell-2}(T)$ spanned
by the Lagrange basis functions that do not vanish identically
on $F$. The dimension of $X_F$ equals that of $P_{\ell-2}(F)$.
Let $\tilde\phi_1,\dots,\tilde\phi_{N_{\ell-4}} \in X_F$ satisfy
\begin{equation}\label{e:tildephi_orth}
\fint_{F}b_{F}^2\tilde{\phi}_i\phi_{j}\mathrm{d}S = \delta_{i,j} 
\text{ and }~\fint_{F}b_{F}^2\tilde{\phi}_i\varphi_{k}\mathrm{d}S = 0
\quad\text{for all }
\begin{cases}
 i,j=1,\dots,N_{\ell-4}\\k=1,\dots,2\ell-3.
\end{cases}
\end{equation}
Note that we have chosen the space $X_F$ for the purpose of uniqueness
of the functions $\tilde\phi_i$.
For the functions $\varphi_1,\dots,\varphi_{2\ell-3}$ from \eqref{express1}
we have
\begin{equation}
\begin{aligned}
	\int_{F}\partial_n b_Tb_{F}\tilde{\phi}_i\varphi_k\mathrm{d}S 
	= 
	(\partial_n\lambda_F)\int_{F}b^2_{F} \tilde{\phi}_i
\varphi_k 
	\mathrm{d}S
=	
0
\quad\text{ for all }
\begin{cases}
 i=1,\dots,N_{\ell-4}\\k=1,\dots,2\ell-3.
\end{cases}
	\label{express2}
\end{aligned}
\end{equation}
Therefore, the normal derivative  of $	b_Tb_F\tilde{\phi}_i$
$i=1,\cdots,N_{\ell-4}$, is orthogonal to $\varphi_k,k=1,\cdots,2\ell-3$. 
Given any $F\in \mathcal{F}(T)$, define the $N_{\ell-4}$ bubble functions as
\begin{equation}
	\xi_i^F :=  
	b_Tb_F\tilde{\phi}_i
	   -\tilde q_i,
	   \quad i=1,\cdots, N_{\ell-4}.
\label{express3}
\end{equation}
Here $\tilde q_1,\dots,\tilde q_{N_{\ell-4}}\in b^2_T P_{\ell-4}(T)$  satisfy
$$
\int_T\tilde q_i q_{\ell-4}\mathrm{d}x=
\int_T b_Tb_F\tilde{\phi}_i
q_{\ell-4}\mathrm{d}x
\quad\text{for all } q_{\ell-4}\in P_{\ell-4}(T).
$$

\begin{remark}\label{rem:xidof}
According to \eqref{express2} and \eqref{express3},
$\xi_i^F$ span the basis functions 
 dual to the degrees of freedom 
\eqref{F:dof3d_normalA}.
\end{remark}

We define the shape function space of our finite element as 
\begin{equation*}
\mathcal{P}_{T,\ell} = P_{\ell}(T) + \Phi_{T,\ell}^B
\end{equation*}
where
$\Phi_{T,\ell}^B 
= \mathrm{span}\{\xi_i^F: F\in\mathcal F(T),~
          i=1,\cdots,N_{\ell-4}\}.$

\begin{mylem}
    Let $\ell\geq 5$.
	The degrees of freedom defined by the functionals
	from \eqref{e:dof3d_nodal} -- \eqref{e:dof3d_volume} are 
	unisolvent for the shape function space
	$\mathcal{P}_{T,\ell}$.
\end{mylem}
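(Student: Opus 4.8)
The plan is to establish unisolvence in the customary two steps: a dimension count showing that the number of functionals \eqref{e:dof3d_nodal}--\eqref{e:dof3d_volume} equals $\dim\mathcal P_{T,\ell}$, and the substantial part, namely that any $v_h\in\mathcal P_{T,\ell}$ annihilated by all of them vanishes. Writing $v_h=q_\ell+w$ with $q_\ell\in P_\ell(T)$ and $w=\sum_{F\in\mathcal F(T)}\sum_i a_i^F\xi_i^F\in\Phi_{T,\ell}^B$, I would first isolate the polynomial part. The key preliminary observation is that every bubble $\xi_i^F$ is annihilated by all functionals \emph{except} the face normal moments \eqref{F:dof3d_normalA}. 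Indeed, since $b_T$ vanishes to third order at each vertex and its gradient vanishes along every edge, and since $\xi_i^F=b_Tb_F\tilde\phi_i-\tilde q_i$ with $\tilde q_i\in b_T^2P_{\ell-4}(T)$, one checks directly that $\xi_i^F$ together with $\nabla\xi_i^F$ and $D^2\xi_i^F$ vanish at the vertices (so \eqref{e:dof3d_nodal} vanishes), that $\xi_i^F$ and $\nabla\xi_i^F$ vanish on all edges (so \eqref{e:dof3d_edge} and \eqref{e:dof3d_edge_normal} vanish), that $\xi_i^F$ vanishes on all faces (so \eqref{F:dof3d_normalB} vanishes), and that the volume moments \eqref{e:dof3d_volume} vanish by the very definition of $\tilde q_i$. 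Consequently $q_\ell$ is annihilated by all functionals other than \eqref{F:dof3d_normalA}, and the problem reduces to showing that this reduced set is unisolvent on $P_\ell(T)$.

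For that reduced claim I would argue edge by edge and then face by face. On each edge $e$, the nodal data \eqref{e:dof3d_nodal} supply the full value, gradient and Hessian at the two endpoints, i.e.\ $C^2$ Hermite data, while \eqref{e:dof3d_edge} supplies the moments against $P_{\ell-6}(e)$; these together form a unisolvent set for $P_\ell(e)$ (two-point $C^2$-Hermite plus interior moments, the remainder being controlled by the sign-definite weight $b_e^3$), so $q_\ell$ vanishes on every edge. Hence on each face $F$ one has $q_\ell|_F=b_F s_F$ with $s_F\in P_{\ell-3}(F)$, and the crucial step is to prove $s_F=0$. Since the in-face normal $\nu$ to an edge $e\subset\partial F$ lies in $\mathrm{span}\{n_1,n_2\}$, the edge normal functionals \eqref{e:dof3d_edge_normal} also control $\int_e\partial_\nu q_\ell\,q_{\ell-5}\,\mathrm ds$; computing $\partial_\nu q_\ell|_e$ as a nonzero multiple of $b_e\,s_F|_e$ and using that $s_F$ vanishes at the vertices of $F$ (because $q_\ell$ vanishes to third order there while $b_F$ vanishes to exactly second order), I would write $s_F|_e=b_e t$ and deduce $\int_e b_e^2 t^2\,\mathrm ds=0$, whence $s_F|_e=0$. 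Therefore $s_F=b_F u_F$ with $u_F\in P_{\ell-6}(F)$, and the face moments \eqref{F:dof3d_normalB} give $\int_F b_F^2 u_F^2\,\mathrm dS=0$, so $q_\ell|_F=0$. With $q_\ell$ vanishing on all of $\partial T$ I write $q_\ell=b_T r$, $r\in P_{\ell-4}(T)$, and conclude $r=0$ from \eqref{e:dof3d_volume} by testing against $r$ itself (the weight $b_T$ being sign-definite). Hence $q_\ell=0$.

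It then remains to treat $w$. With $q_\ell=0$ we have $v_h=w$, and evaluating \eqref{F:dof3d_normalA} I would use that $\partial_n\xi_i^{F'}$ vanishes on any face $F\neq F'$ together with the biorthogonality \eqref{e:tildephi_orth}--\eqref{express2} (Remark~\ref{rem:xidof}); this renders the resulting linear system block diagonal with invertible blocks $(\partial_n\lambda_F)\,\mathrm{Id}$, forcing every $a_i^F=0$ and hence $v_h=0$. The same facts show that the bubbles are linearly independent and that $P_\ell(T)\cap\Phi_{T,\ell}^B=\{0\}$, so that $\dim\mathcal P_{T,\ell}=\dim P_\ell(T)+4N_{\ell-4}$; a routine dimension count verifies that this equals the number of functionals, and together with the injectivity just established this yields unisolvence.

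I expect the main obstacle to be the vanishing of the face trace $q_\ell|_F$, that is, the step $s_F=0$: it is the only place where the nodal, edge-normal, and face-value functionals must be combined, and it is exactly what dictates the moment orders chosen in \eqref{e:dof3d_nodal}--\eqref{F:dof3d_normalB}. In particular, the use of the \emph{second-order} vertex data and the observation that the in-face normal derivative along $e$ is captured by the two edge normals $n_1,n_2$ are both essential; without them $s_F$ would remain underdetermined and the functionals would fail to be unisolvent on $P_\ell(T)$.
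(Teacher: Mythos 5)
Your proof is correct and follows essentially the same route as the paper's: split off the bubble part (which is annihilated by every functional except \eqref{F:dof3d_normalA}), show that the polynomial part vanishes via the edge- and face-trace arguments together with the volume moments, and then use the biorthogonality \eqref{e:tildephi_orth}--\eqref{express2} to kill the bubble coefficients. The only real difference is that you spell out in detail the trace-vanishing step that the paper compresses into the single assertion $v_h=b_Tq_{\ell-4}+\sum_{i,F}c_i^F\xi_i^F$.
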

\begin{proof}
To begin with, we observe that $\mathcal{P}_{T,\ell}=P_{\ell} + \Psi_{T,\ell}^B$
is a direct sum. To see this, we consider an arbitrary 
$w\in P_{\ell}(T)\cap\Psi_{T,\ell}^B$. 
Since $\Psi_{T,\ell}^B$ is spanned by the basis functions
$\xi_i^F$
and and each $\xi_i^F$ has the common factor $b_T$, it follows that $w$ can
be expressed as $w=b_Tq_{w}$ with some $q_{w}\in P_{\ell-4}(T)$. 
By construction, the volume integral over $T$ of each each $\xi_i^F$ against
polynomials of degree up to $(\ell-4)$ vanishes.
In particular,
$$
 0 = \int_T w q_w \mathrm{d}x
 = \int_T b_T q_w^2 \mathrm{d}x,
$$
which implies $q_w=0$ and therefore $w=0$.
This establishes that the direct sum property.
The dimension of \eqref{e:dof3d_nodal} -- \eqref{e:dof3d_volume} at most 
$\mathrm{dim}P_{\ell}(T)+4N_{\ell-4}=\mathrm{dim}\mathcal{P}_{T,\ell}$.
Let $v_h\in \mathcal{P}_{\ell}(T)$ vanish on the degrees of freedom
\eqref{e:dof3d_nodal} -- \eqref{e:dof3d_volume}.
We will show that $v_h \equiv 0$.
Since $v_h|_F\in P_\ell(F)$
for any face $F\in\mathcal{F}(T)$ and $v_h$ vanishes on
\eqref{e:dof3d_nodal},
\eqref{e:dof3d_edge},
\eqref{e:dof3d_edge_normal},
\eqref{F:dof3d_normalB},
the function $v_h$ can be written as   	
$$
  v_h = b_Tq_{\ell-4} + \sum_{i=1}^{N_{\ell-4}}\sum_{F\in \mathcal{F}(T)}c^F_i\xi_i^F
$$
for some $q_{\ell-4}\in P_{\ell-4}(T).$ 
By their definition \eqref{express3}, 
the functions $\xi_i^F$ vanish on \eqref{e:dof3d_volume}. Therefore, $v_h$ can be expressed as
$$
v_h = \sum_{i=1}^{N_{\ell-4}}\sum_{F\in \mathcal{F}(T)}c^F_i\xi_i^F.
$$
As $v_h$ vanishes on degrees of freedom \eqref{F:dof3d_normalA}
and, by Remark~\ref{rem:xidof}, the $\xi_i^F$ span the space corresponding
dual basis functions, we deduce
$c_i^F=0$ for all $i=1,\cdots,N_{\ell-4}$ and all $F\in \mathcal{F}(T)$.
This shows $v_h=0$ and concludes the proof.
\end{proof}

We denote by $\mathcal{P}_{\mathcal T,\ell}$ the space of functions whose restriction
to any $T\in\mathcal T$ belongs to $\mathcal{P}_{T,\ell}$.
The global finite element space reads
\begin{align*}
	V^{(\ell)}_{\mathcal T}=
	H^1_0(\Omega) \cap
	\left\{
	v\in \mathcal{P}_{\mathcal T,\ell}:
	\begin{aligned}
	 & v,~\nabla v, D^2 v~\text{are continuous at }\mathcal N,\\
	&v, \nabla v~\text{ are continuous at }\mathcal E,
	\text{ and }\int_F[\partial_nv]q \mathrm{d}S=0 \\&\text{ for all interior faces }F
	\text{ and }q\in P_{\ell-4}(F)
	\end{aligned}
	\right\}.
\end{align*}    

The subsequent lemma establishes improved normal continuity
for the elements of $V_{\mathcal T}^{(\ell)}$.
\begin{mylem}
 Any $v_h\in V^{(\ell)}_{\mathcal T}$, $\ell\geq 5$,
 satisfies for all interior faces
 $F$ that
\begin{equation*}
	\int_F [\partial_nv_h]p_{\ell-2}\mathrm{d}S = 0
	\quad\text{for all } p_{\ell-2}\in P_{\ell-2}(F).
\end{equation*}
\label{3Dsuper}
\end{mylem}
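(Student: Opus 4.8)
The plan is to follow the strategy of Lemma~\ref{2Dsuper}, exploiting the auxiliary face polynomials $\varphi_1,\dots,\varphi_{2\ell-3}$ and the bubble functions $\xi_i^F$ from \eqref{express1}--\eqref{express3}. The first step is a reduction to finitely many test functions. A dimension count gives $\dim P_{\ell-2}(F)-\dim P_{\ell-4}(F)=2\ell-3$, which is precisely the number of the $\varphi_k$; consequently $(\phi_1,\dots,\phi_{N_{\ell-4}},\varphi_1,\dots,\varphi_{2\ell-3})$ is a basis of $P_{\ell-2}(F)$. Since the defining constraints of $V^{(\ell)}_{\mathcal T}$ already give $\int_F[\partial_n v_h]q\,\mathrm dS=0$ for every $q\in P_{\ell-4}(F)=\operatorname{span}\{\phi_j\}$, it suffices to prove $\int_F[\partial_n v_h]\varphi_k\,\mathrm dS=0$ for each $k=1,\dots,2\ell-3$.

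Fix an interior face $F$ shared by $T_+,T_-$ and write $v_h|_{T_\pm}=p_{\ell,\pm}+b_\pm$ with $p_{\ell,\pm}\in P_\ell(T_\pm)$ and $b_\pm\in\Phi_{T_\pm,\ell}^B$; I would treat the polynomial and the bubble contributions to $\int_F[\partial_n v_h]\varphi_k\,\mathrm dS$ separately. For the polynomial part, the key point is that $[\nabla v_h]$ vanishes pointwise along the edges of $F$: the constraints of $V^{(\ell)}_{\mathcal T}$ enforce continuity of $\nabla v_h$ on $\mathcal E$, while every bubble $\xi_i^{F'}$ has vanishing gradient on the edges of $T$ (both $b_T$ and the correction $\tilde q_i\in b_T^2P_{\ell-4}(T)$ vanish to second order there). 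Hence $(\partial_n p_{\ell,+}-\partial_n p_{\ell,-})|_F$ is a polynomial of degree $\ell-1$ on $F$ vanishing on $\partial F$, so it equals $b_Fq_{\ell-4}$ for some $q_{\ell-4}\in P_{\ell-4}(F)$, and the orthogonality \eqref{express1} gives $\int_F b_Fq_{\ell-4}\varphi_k\,\mathrm dS=0$.

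For the bubble part I would show that only the bubbles attached to the shared face $F$ leave a trace in the normal derivative on $F$. Writing $\lambda_F$ for the barycentric coordinate vanishing on $F$, one has $b_T=\lambda_F b_F$, and for $F'\neq F$ the factor $b_{F'}$ also carries $\lambda_F$ (because the vertex opposite $F$ lies on $F'$); thus $b_Tb_{F'}$ contains $\lambda_F^2$ and its normal derivative vanishes on $F$, as does that of each $\tilde q_i\in b_T^2P_{\ell-4}(T)$. For the surviving $F$-bubbles, the computation underlying \eqref{express2} gives $\partial_n\xi_i^F|_F=(\partial_n\lambda_F)\,b_F^2\,\tilde\phi_i$, so the bubble contribution to $\int_F[\partial_n v_h]\varphi_k\,\mathrm dS$ is a linear combination of the integrals $\int_F b_F^2\tilde\phi_i\varphi_k\,\mathrm dS$, each of which vanishes by \eqref{e:tildephi_orth}. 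Summing the two vanishing contributions proves the claim.

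The step I expect to be the main obstacle is the bookkeeping for the bubble part, namely verifying that the cross-face bubbles $\xi_i^{F'}$ with $F'\neq F$ and the volume corrections $\tilde q_i$ contribute nothing to the normal trace on $F$, so that everything collapses onto the single orthogonality relation \eqref{e:tildephi_orth}. This is exactly where the careful design of the bubble functions in \eqref{express1}--\eqref{express3}---in particular, the choice to build the $\tilde\phi_i$ orthogonal to the $\varphi_k$ on the face---pays off, and it is also what lets the three-dimensional element gain two full moment orders, as opposed to the single order gained in the planar Lemma~\ref{2Dsuper}.
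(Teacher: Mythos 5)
Your proposal is correct and follows essentially the same route as the paper's proof: reduce to the test functions $\varphi_k$, split $v_h|_{T_\pm}$ into its $P_\ell$ part (handled via the vanishing of the gradient jump on $\partial F$ and the orthogonality \eqref{express1}) and its bubble part (handled via $\partial_n\xi_i^F|_F=(\partial_n\lambda_F)b_F^2\tilde\phi_i$ and \eqref{e:tildephi_orth}). Your explicit check that the cross-face bubbles $\xi_i^{F'}$, $F'\neq F$, and the corrections $\tilde q_i\in b_T^2P_{\ell-4}(T)$ leave no normal trace on $F$ is a detail the paper passes over silently, but it does not change the argument.
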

\begin{proof}
Let $F\in\mathcal F$ be an interior face shared by elements
$T_+$, $T_-$.
According to the definition \eqref{F:dof3d_normalA}, 
the asserted relation is satisfied for all test polynomials
of degree $\ell-4$ so that we only need to verify it
for polynomials $p_{\ell-2}\in P_{\ell-2}(F)$ that do not belong to 
$P_{\ell-4}(F)$.
We will take the linear independent
polynomials from \eqref{express1} as such test functions.
 Similar as in the 2D case, by the definition of
 $\mathcal P_{T,\ell}$,
 the function $v_h|_{T_\pm}$ can be rewritten 
 as $v_h|_{T_\pm}=p_{\ell,\pm} + \xi^B_\pm$, 
 where $p_{\ell,\pm}\in P_{\ell}(T_\pm)$ and
 $\xi^B_\pm\in \Phi^B_{T_\pm,\ell}.$
 Then we test with functions $\varphi_i$ defined in \eqref{express1}
 \begin{align}\label{e:3dproof_split} 
 \int_{F}[\partial_nv_h]\varphi_i\mathrm{d}S = \int_{F}(\partial_n p_{\ell,+}-\partial_n p_{\ell,-})\varphi_i\mathrm{d}S + \int_{F}(\partial_n\xi^B_+-\partial_n\xi^B_-)\varphi_i\mathrm{d}S.
 \end{align}
 The functions $\xi^B_\pm$ and $\nabla\xi^B_\pm$ vanish on all vertices and 
 edges.
  From the continuity properties of $ V^{(\ell)}_{\mathcal T}$,
 we see that the piecewise polynomial function $p_{\ell,\pm}$
 and its piecewise gradient are continuous at the vertices and edges.
 Therefore, the normal derivative jump on face $F$ vanishes on $\partial F$
 and thus there
 exists a polynomial $q_{\ell-4}\in P_{\ell-4}(F)$ such that
 $(\partial_np_{\ell,+}-\partial_np_{\ell,-})|_F = b_Fq_{\ell-4}$.
 The orthogonality relation \eqref{express1} shows that the first integral
 on the right-hand side of \eqref{e:3dproof_split} equals zero.
  In the remaining part of the proof we will show that the
 integrals
 $$
 \int_F\partial_n\xi^B_\pm\varphi_i\mathrm dS
 $$
 equal zero, which implies that the left-hand side of
 \eqref{e:3dproof_split} vanishes.
 Consider $\partial_n\xi^F_k$,
 $k=1,\cdots,N_{\ell-4}$.
 Since the volume bubble $\tilde{q}_j\in H_0^2(T)$ does not contribute
 to boundary integrals, we have from \eqref{e:tildephi_orth} that
\begin{align*}
	\int_{F\cap T_+} \partial_n\xi^F_j\varphi_i \mathrm{d}S
	& = (\partial_n\lambda_F)
	     \int_{F\cap T_+}b^2_F\tilde{\phi}_j
	       \varphi_i
	        \mathrm{d}S=0.
\end{align*}
This finishes the proof.
\end{proof}

As a consequence of Lemma~\ref{3Dsuper}
we note the following.
\begin{corollary}
 The spaces $V_{\mathcal T}^{(\ell)}$, with $\ell\geq 5$ even, satisfy the discrete
 Miranda--Talenti property from Definition~\ref{def:dmt}.
\end{corollary}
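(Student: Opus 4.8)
The plan is to reduce the assertion to the integration-by-parts identity of Lemma~\ref{DMT1} and the improved normal continuity of Lemma~\ref{3Dsuper}, exactly as in the two-dimensional corollary. First I would check that $V_{\mathcal T}^{(\ell)}$ meets the hypotheses of Lemma~\ref{DMT1}: its members lie in $H^1_0(\Omega)\subset H^1(\Omega)$ and are therefore globally continuous, and by the defining constraints $v$ and $\nabla v$ are continuous along all edges $\mathcal E$, which are precisely the $(d-2)$-dimensional hyperfaces for $d=3$. Hence $V_{\mathcal T}^{(\ell)}$ is a $C^0$ space with $C^1$ continuity on the edges, and Lemma~\ref{DMT1} yields
\[
\|\triangle_{\mathcal T}v_h\|_0^2 = \|D^2_{\mathcal T}v_h\|_0^2 + 2\sum_{i=1}^{2}\sum_{F\in\mathcal F}([\partial_n v_h],\partial^2_{t_{F_i}}v_h)_F,
\]
so that it only remains to show that every face contribution vanishes.

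The key preliminary observation is that, for each face $F$, the tangential second derivative $\partial^2_{t_{F_i}}v_h|_F$ belongs to $P_{\ell-2}(F)$. This follows because every bubble function $\xi_j^{F'}\in\Phi_{T,\ell}^B$ (for any face $F'$ of $T$) vanishes on the whole boundary $\partial T$: it carries the factor $b_T=\lambda_1\lambda_2\lambda_3\lambda_4$, and the correction $\tilde q_j\in b_T^2 P_{\ell-4}(T)$ likewise vanishes on $\partial T$. Consequently the trace $v_h|_F$ agrees with the trace of the $P_\ell(T)$ part and lies in $P_\ell(F)$; differentiating twice tangentially drops the degree to $\ell-2$. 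Since $v_h$ is continuous across $F$, this trace is single-valued, so $\partial^2_{t_{F_i}}v_h$ is a well-defined element of $P_{\ell-2}(F)$.

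With this in hand I would dispose of the two types of faces separately. For an interior face $F$, I apply Lemma~\ref{3Dsuper} with the test polynomial $p_{\ell-2}=\partial^2_{t_{F_i}}v_h\in P_{\ell-2}(F)$, which gives $([\partial_n v_h],\partial^2_{t_{F_i}}v_h)_F=0$. For a boundary face $F\subset\partial\Omega$, the homogeneous Dirichlet condition $v_h\in H^1_0(\Omega)$ forces the trace $v_h|_F$ to vanish, so all its tangential derivatives, and in particular $\partial^2_{t_{F_i}}v_h|_F$, are zero and the corresponding term drops out. Summing over all faces and both tangential directions therefore annihilates the boundary sum, leaving $\|D^2_{\mathcal T}v_h\|_0^2=\|\triangle_{\mathcal T}v_h\|_0^2$, which in particular implies \eqref{dMT}.

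The argument is essentially a combination of the two quoted lemmas, so I do not expect a genuine obstacle. The only point requiring care is the degree count of the preceding paragraph: one must be sure that the face enrichment contributes nothing to $v_h|_F$, that is, that the bubble functions and their volume corrections vanish on $\partial T$, so that the tangential Hessian on each face is a polynomial of degree at most $\ell-2$ and Lemma~\ref{3Dsuper} is applicable with exactly that degree.
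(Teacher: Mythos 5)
Your proposal is correct and follows the same route as the paper, which simply cites Lemma~\ref{DMT1} together with Lemma~\ref{3Dsuper}; you have merely made explicit the two details the paper leaves implicit, namely that the bubble enrichment vanishes on $\partial T$ so that $\partial^2_{t_{F_i}}v_h|_F\in P_{\ell-2}(F)$, and that boundary faces contribute nothing because the trace of $v_h$ vanishes there.
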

\begin{proof}
 The proof is consequence of Lemma~\ref{DMT1}
 and Lemma~\ref{3Dsuper}.
\end{proof}

\subsection{Example for $\ell=5$}
We give details for the lowest-order version of our three-dimensional
finite element.
On a simplex $T$, the shape function space reads
$$
\mathcal{P}_{T,5} = P_5(T)+\mathrm{span}\{\xi_i^F:F\in\mathcal F(T),i=1,2,3\}.
$$
The degrees of freedom are the point evaluation of the function,
the gradient, and the Hessian at the vertices;
the averages of the two normal derivatives along each edge;
the zeroth and first moments of the normal derivative over each face;
and the volume integrals of the function against polynomials of degree $\leq 1$.
The degrees of freedom are visualized in Figure~\ref{fig:3d_illustration}.
The local number of degrees of freedom therefore amounts to $68$.

\begin{figure}
\begin{tabular}{c|c|c|c}
4 vertices &
6 edges & 4 faces & volume
\\
\hline
\begin{tikzpicture}[scale=2.2]
\fill (0,0) circle (1pt);
\draw (0,0) circle (2.5pt);
\draw (0,0) circle (4.4pt);
\end{tikzpicture}
&
\begin{tikzpicture}[scale=1.8]
\draw[very thick] (0,0) -- (1,.6);
\draw[->,thick] (.5,.3)--(.7,.2);
\draw[->,thick] (.5,.3)--(.66,.5);
\end{tikzpicture}
&
\begin{tikzpicture}[scale=2.2]
\draw[fill=gray] (.6,.3) -- (1,.5) -- (.5,1)--cycle;
\draw[->,thick](.675,.525)--(.845,.695);
\draw[->,thick] (.775,.575)--(.975,.775);
\draw[->,thick] (.65,.7)--(.8,.85);
\end{tikzpicture}
&
\begin{tikzpicture}[scale=2.2]
\draw[] (.6,.3) -- (1,.5) -- (.5,1) -- cycle;
\draw[] (.6,.3) -- (.3,.5)--(.5,1);
\draw[dashed] (.3,.5)--(1,.5);
\fill (.5,.64) circle (1pt);
\fill (.58,.57) circle (1pt);
\fill (.5,.55) circle (1pt);
\fill (.59,.66) circle (1pt);
\end{tikzpicture}
\end{tabular}

\caption{Visualization of the degrees of freedom 
         of the three-dimensional finite element for $\ell=5$.
         \label{fig:3d_illustration}}
\end{figure}

\section{Application I: Non-divergence form equations}\label{s:nondiv}

As the first application of the finite elements satisfying a strong discrete 
Miranda--Talenti property \eqref{dMT} we present
equations in non-divergence form \eqref{nondiv}. 

\subsection{Cordes condition and strong solutions}
This section will list some basic results on strong solutions of 
problem \eqref{nondiv}. 
We assume that $A\in L^{\infty}(\Omega;\mathbb{R}^{d\times d})$, $d=2,3$,
is uniformly elliptic, i.e., there exist positive constants $\lambda$, $\Lambda$
such that
\begin{equation}
	\lambda \leq \eta^TA\eta\leq \Lambda\quad \text{a.e. in }\Omega
	\quad \text{for all }\eta\in \mathbb{R}^d \text{ with } \eta^T\eta = 1.
	\label{uniell}
\end{equation}
For the well-posedness of \eqref{nondiv} we impose an additional
condition known as Cordes condition \cite{MT,Suli2013}.
\begin{myDef}[Cordes condition]
	There exist an $\varepsilon\in (0,1]$ such that 
	\begin{equation}
		\frac{|A|^2}{(\mathrm{tr}A)^2}\leq \frac{1}{d-1+\varepsilon}.
		\label{cordes}
	\end{equation}
\end{myDef}
In the case $d=2$ the Cordes condition is a consequence of the 
uniform ellipticity \eqref{uniell} while it is an essential condition
if $d=3$.
Following \cite{Suli2013} we introduce the scaling function
$\gamma := |A|^{-2}\operatorname{tr}A$. Then the variational formulation of \eqref{nondiv} is given as  
\begin{equation}
	A(u,v) : = \int_{\Omega}\gamma (A:D^2u)\triangle v\mathrm{d}x = \int_{\Omega}\gamma f\triangle v\mathrm{d}x,\quad \text{for all } v\in V. 
	\label{nondivvar}
\end{equation} 

The Cordes condition guarantees adequate control on
the distance between $A:D^2 v$ and $\triangle v$.
The combination with the Miranda--Talenti inequality \eqref{MT} 
results in the following well-posedness result for \eqref{nondiv}.
\begin{mylem}[\cite{Suli2013}, Theorem 3]
	Let $A$ satisfy \eqref{uniell} and \eqref{cordes}.
     Then, for any open set $U\subset \Omega$ and $v\in H^2(U)$, we have
	\begin{equation}
		|\gamma A:D^2 v- \triangle v|\leq \sqrt{1-\varepsilon}|D^2 v|,\quad a.e. \text{ in } U,
		\label{cordes2}
	\end{equation}
	where the $\varepsilon$ is defined in \eqref{cordes}. 
	Moreover, for any $f\in L^2(\Omega)$ problem \eqref{nondivvar}
	has a unique solution $u\in H^2(\Omega)\cap H_0^1(\Omega)$ and satisfies
	\begin{equation}
		\|D^2 u\|_0 \leq
		\|\triangle u\|_0 \leq
		\frac{\|\gamma\|_{\infty}}{1-\sqrt{1-\varepsilon}}\|f\|_0.
	\end{equation}
\end{mylem}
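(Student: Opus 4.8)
The assertion comprises two parts --- the pointwise bound \eqref{cordes2} and the well-posedness of \eqref{nondivvar} --- and I would establish them in that order, since the first feeds directly into the coercivity argument for the second. For \eqref{cordes2} I would argue pointwise almost everywhere in $U$. Writing $\triangle v = I:D^2 v$, one has $\gamma A:D^2 v - \triangle v = (\gamma A - I):D^2 v$, so the Cauchy--Schwarz inequality for the Frobenius inner product gives $|\gamma A:D^2 v - \triangle v| \le |\gamma A - I|\,|D^2 v|$, and it remains to show $|\gamma A - I| \le \sqrt{1-\varepsilon}$. Substituting $\gamma = |A|^{-2}\operatorname{tr}A$ and expanding $|\gamma A - I|^2 = \gamma^2|A|^2 - 2\gamma\operatorname{tr}A + d$, the cross term cancels half of the quadratic term, leaving $|\gamma A - I|^2 = d - (\operatorname{tr}A)^2/|A|^2$. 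The Cordes condition \eqref{cordes}, rewritten as $(\operatorname{tr}A)^2/|A|^2 \ge d-1+\varepsilon$, then yields $|\gamma A - I|^2 \le 1-\varepsilon$, as desired.

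For the well-posedness I would set up the Lax--Milgram theorem on $V = H^2(\Omega)\cap H^1_0(\Omega)$ equipped with the norm $\|\triangle\,\cdot\,\|_0$; by \eqref{MT} together with the elementary bound $|\triangle v|\le\sqrt{d}\,|D^2 v|$ and Poincaré inequalities for functions of vanishing trace, this norm is equivalent to the full $H^2$ norm, so $(V,\|\triangle\,\cdot\,\|_0)$ is a Hilbert space, and the right-hand side functional $v\mapsto\int_\Omega\gamma f\,\triangle v\,\mathrm dx$ is bounded with norm at most $\|\gamma\|_\infty\|f\|_0$. The decisive step is coercivity: adding and subtracting $\triangle v$ gives
\[
A(v,v) = \|\triangle v\|_0^2 + \int_\Omega (\gamma A:D^2 v - \triangle v)\,\triangle v\,\mathrm dx,
\]
and the second integral is bounded in modulus, using \eqref{cordes2} and Cauchy--Schwarz and then \eqref{MT}, by $\sqrt{1-\varepsilon}\,\|D^2 v\|_0\|\triangle v\|_0 \le \sqrt{1-\varepsilon}\,\|\triangle v\|_0^2$. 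Hence $A(v,v)\ge(1-\sqrt{1-\varepsilon})\|\triangle v\|_0^2$, with positive constant because $\varepsilon>0$. An analogous computation, replacing the splitting by the triangle inequality $|\gamma A:D^2 u|\le\sqrt{1-\varepsilon}\,|D^2 u|+|\triangle u|$, yields boundedness $|A(u,v)|\le(1+\sqrt{1-\varepsilon})\|\triangle u\|_0\|\triangle v\|_0$.

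The Lax--Milgram theorem then furnishes a unique $u\in V$ solving \eqref{nondivvar} together with the stability estimate $\|\triangle u\|_0\le\|\gamma\|_\infty\|f\|_0/(1-\sqrt{1-\varepsilon})$, and a final appeal to the Miranda--Talenti inequality \eqref{MT} supplies the remaining estimate $\|D^2 u\|_0\le\|\triangle u\|_0$. I expect the coercivity estimate to be the main obstacle, as it is the one place where the pointwise Cordes bound and the Miranda--Talenti inequality must be used in tandem; everything else reduces to the Cauchy--Schwarz inequality and standard Hilbert-space functional analysis.
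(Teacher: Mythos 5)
Your argument is correct and is essentially the standard proof of this result; the paper itself offers no proof but simply cites \cite{Suli2013}, Theorem~3, and your two steps (the pointwise algebraic identity $|\gamma A - I|^2 = d - (\operatorname{tr}A)^2/|A|^2$ combined with the Cordes condition, followed by Lax--Milgram with coercivity obtained from \eqref{cordes2} and the Miranda--Talenti inequality \eqref{MT}) reproduce exactly the argument of that reference. The only point worth making explicit is that the well-posedness part relies on \eqref{MT}, hence on the convexity of $\Omega$, which the lemma inherits from the standing assumptions rather than stating it.
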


\subsection{Finite element discretization}
The discrete problem corresponding to \eqref{nondivvar}
seeks 
$u_h\in V_{\mathcal T}\subseteq H^2(\mathcal{T})\cap H_0^1(\Omega)$, such that
\begin{equation}
	A_h(u_h,v_h):=\int_\Omega (\gamma A:D^2_{\mathcal T} u_h)\triangle_{\mathcal T} v_h\mathrm{d}x 
	= 
	\int_\Omega \gamma f \triangle_{\mathcal T} v_h \mathrm{d}x
	 \quad \text{for all } v_h\in V_{\mathcal T}.
	\label{dnondiv}	
\end{equation} 

Since the exact solution $u$ satisfies $A:D^2 u=f$ pointwise
almost everywhere in $\Omega$, the following analogue of what
is called Galerkin orthogonality in conforming methods is valid
\begin{equation}\label{e:galerkinorthogonality}
A_h(u-u_h,v_h)=0 \quad\text{for all }v_h\in V_{\mathcal T}. 
\end{equation}

The basic error estimate is as follows.

\begin{myTheo}
	Let $\Omega$ be a bounded convex polytopal domain with simplicial triangulation 
	$\mathcal{T}$ and let $A\in L^{\infty}(\Omega;\mathbb R^{d\times d})$
	satisfy uniform ellipticity \eqref{uniell} and the Cordes condition
	\eqref{cordes}.
	Let $u$ be the solution of \eqref{nondivvar}.
	If $V_{\mathcal T}\subseteq H^2(\mathcal{T})\cap H_0^1(\Omega) $ 
	satisfies the strong discrete Miranda--Talenti property, 
	then \eqref{dnondiv} has a unique solution $u_h\in V_{\mathcal T}$,
	which satisfies
    \begin{equation*}
		\|D^2_{\mathcal T}(u-u_h)\|_0
    \leq 
    \left(1+\frac{\Lambda}{1-\sqrt{1-\varepsilon}}\right)
    \inf_{w_h\in V_{\mathcal T}}\|D^2_{\mathcal T}(u-w_h)\|_0 . 
    \end{equation*}
	\label{duni}
\end{myTheo}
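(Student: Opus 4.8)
The plan is to follow the standard Céa/Strang paradigm for a coercive Galerkin problem, but with both coercivity and boundedness of $A_h(\cdot,\cdot)$ measured in the mesh-dependent Laplacian norm $\|\triangle_{\mathcal T}\cdot\|_0$, using the strong discrete Miranda--Talenti property \eqref{dMT} as the decisive ingredient. Concretely, I would first prove coercivity on $V_{\mathcal T}$, then boundedness, deduce unique solvability of the square system \eqref{dnondiv}, and finally combine coercivity with the Galerkin orthogonality \eqref{e:galerkinorthogonality} to obtain the quasi-optimality estimate.

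\emph{Coercivity.} For $v_h\in V_{\mathcal T}$ I rewrite $\gamma A:D^2_{\mathcal T}v_h=\triangle_{\mathcal T}v_h-(\triangle_{\mathcal T}v_h-\gamma A:D^2_{\mathcal T}v_h)$, so that
\[
A_h(v_h,v_h)=\|\triangle_{\mathcal T}v_h\|_0^2-\int_\Omega(\triangle_{\mathcal T}v_h-\gamma A:D^2_{\mathcal T}v_h)\triangle_{\mathcal T}v_h\,\mathrm dx.
\]
Applying the Cordes bound \eqref{cordes2} pointwise on each simplex interior together with Cauchy--Schwarz, and then the discrete Miranda--Talenti property \eqref{dMT} to replace $\|D^2_{\mathcal T}v_h\|_0$ by $\|\triangle_{\mathcal T}v_h\|_0$, gives
\[
A_h(v_h,v_h)\ge(1-\sqrt{1-\varepsilon})\,\|\triangle_{\mathcal T}v_h\|_0^2.
\]
This is the heart of the matter and the main obstacle: without \eqref{dMT} the merely pointwise Cordes estimate on the Hessian cannot be turned into control of the global Laplacian norm, and coercivity breaks down. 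This is exactly the obstruction for classical nonconforming elements described in the introduction, and it is why the whole construction of $V_{\mathcal T}$ is needed.

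\emph{Boundedness and well-posedness.} By Cauchy--Schwarz, $|A_h(v,w)|\le\|\gamma A:D^2_{\mathcal T}v\|_0\,\|\triangle_{\mathcal T}w\|_0$, and the pointwise estimate $|\gamma A:M|\le\Lambda|M|$, a consequence of uniform ellipticity \eqref{uniell}, yields $|A_h(v,w)|\le\Lambda\|D^2_{\mathcal T}v\|_0\,\|\triangle_{\mathcal T}w\|_0$ for all $v,w\in H^2(\mathcal T)$. For unique solvability I note that \eqref{dnondiv} is a square linear system, so it suffices to check injectivity: if $v_h\in V_{\mathcal T}$ has $A_h(v_h,v_h)=0$, then coercivity forces $\|\triangle_{\mathcal T}v_h\|_0=0$, whence \eqref{dMT} gives $D^2_{\mathcal T}v_h=0$; the $C^0$ continuity of $V_{\mathcal T}$ together with the continuity of the derivatives on the vertices (2D) or edges (3D) and the homogeneous boundary condition then force $v_h$ to be globally affine with vanishing trace, i.e. $v_h=0$. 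Hence $\|\triangle_{\mathcal T}\cdot\|_0$ is a norm on $V_{\mathcal T}$, the system is nonsingular, and \eqref{dnondiv} admits a unique solution $u_h$.

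\emph{Error estimate.} For arbitrary $w_h\in V_{\mathcal T}$ set $e_h:=w_h-u_h\in V_{\mathcal T}$. Coercivity, the splitting $A_h(e_h,e_h)=A_h(w_h-u,e_h)+A_h(u-u_h,e_h)$, Galerkin orthogonality \eqref{e:galerkinorthogonality}, and boundedness give
\[
(1-\sqrt{1-\varepsilon})\|\triangle_{\mathcal T}e_h\|_0^2\le A_h(e_h,e_h)=A_h(w_h-u,e_h)\le\Lambda\,\|D^2_{\mathcal T}(u-w_h)\|_0\,\|\triangle_{\mathcal T}e_h\|_0.
\]
Dividing by $\|\triangle_{\mathcal T}e_h\|_0$ and then applying \eqref{dMT} to $e_h$ to pass from $\|\triangle_{\mathcal T}e_h\|_0$ back to $\|D^2_{\mathcal T}e_h\|_0$ gives $\|D^2_{\mathcal T}e_h\|_0\le\frac{\Lambda}{1-\sqrt{1-\varepsilon}}\|D^2_{\mathcal T}(u-w_h)\|_0$. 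The triangle inequality $\|D^2_{\mathcal T}(u-u_h)\|_0\le\|D^2_{\mathcal T}(u-w_h)\|_0+\|D^2_{\mathcal T}e_h\|_0$ then produces the constant $1+\Lambda/(1-\sqrt{1-\varepsilon})$, and taking the infimum over $w_h\in V_{\mathcal T}$ completes the proof. The only steps requiring care are the pointwise-to-global passages, both of which are supplied by \eqref{dMT}; everything else is routine.
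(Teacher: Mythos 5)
Your proposal is correct and follows essentially the same route as the paper's proof: the same coercivity estimate via \eqref{cordes2} combined with \eqref{dMT}, the same boundedness bound with constant $\Lambda$, and the same combination of Galerkin orthogonality with the triangle inequality to obtain the quasi-optimality constant $1+\Lambda/(1-\sqrt{1-\varepsilon})$. The only cosmetic difference is that you argue unique solvability by injectivity of the square system (with a more detailed justification that $\|\triangle_{\mathcal T}\cdot\|_0$ is a norm), whereas the paper invokes the Lax--Milgram theorem.
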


\begin{proof}
We abbreviate $\delta:=1-\sqrt{1-\varepsilon}$.
	Since $V_{\mathcal T}$ satisfies \eqref{dMT}, 
	$\|\triangle_{\mathcal T}\cdot\|_0$ defines a norm on $V_{\mathcal T}$.
	Combining \eqref{cordes2} and \eqref{dMT}, we have
	for any $v_h\in V_{\mathcal T}$ that
	\begin{align}\label{e:d_coerc}
	\begin{aligned}
		A_h(v_h,v_h) &= \|\triangle_{\mathcal T} v_h\|^2_0 
		- \sum_{T \in \mathcal{T}}\int_T(\gamma A:D^2v_h-\triangle v_h)\triangle v_h\mathrm{d}x
		\geq \delta\|\triangle_{\mathcal T} v_h\|^2_{0}.
    \end{aligned}
	\end{align}
	Since, by the discrete Miranda--Talenti inequality,
	$A_h(\cdot,\cdot)$ is also bounded on $V_{\mathcal T}$,
	the Lax--Milgram Theorem implies that \eqref{dnondiv} has unique 
	solution $u_h\in V_{\mathcal T}$. 
    Then for any $w_h\in V_{\mathcal T}$, the triangle inequality implies
\begin{equation}\label{e:errest_triang}
\|D^2_{\mathcal T}(u-u_h)\|_0\leq \|D^2_{\mathcal T}(u-w_h)\|_0 + \|D^2_{\mathcal T}(u_h-w_h)\|_0.
\end{equation}
For the analysis of the the second term on the right-hand side,
we abbreviate $e_h:=u_h-w_h$.
The discrete coercivity
\eqref{e:d_coerc} implies
$$
\|\triangle_{\mathcal T} e_h\|_0^2
\leq \delta^{-1}A_h(e_h,e_h)
= \delta^{-1} A_h(u_h-w_h,e_h).
$$
The orthogonality \eqref{e:galerkinorthogonality} and the Cauchy
inequality show that
$$
A_h(u_h-w_h,e_h) 
= A_h(u-w_h,e_h)
\leq \Lambda\|D^2_{\mathcal T}(u-w_h)\|_0 \|\triangle_{\mathcal T}e_h\|_0.
$$
We use the discrete Miranda--Talenti estimate \eqref{dMT} and
the combination of the two foregoing displayed estimates to
conclude
$$
\|D^2_{\mathcal T} e_h\|_0
\leq \|\triangle_{\mathcal T} e_h\|_0
\leq \delta^{-1}\Lambda\|D^2_{\mathcal T}(u-w_h)\|_0
.
$$
Since $w_h\in V_{\mathcal T}$ was arbitrary, the combination with
\eqref{e:errest_triang} proves the assertion.
\end{proof}

We apply Theorem~\ref{duni} to our families of finite elements
from the previous sections and obtain the following 
asymptotic error estimates.
\begin{myTheo}
	Let $\Omega\subseteq \mathbb R^d$ for $d=2,3$ be a bounded convex polygonal domain 
	triangulated by $\mathcal{T}$ and 
	let $A\in L^{\infty}(\Omega)$  satisfy conditions
	\eqref{uniell} and \eqref{cordes}.
	Let $u_h\in V^{(\ell)}_{\mathcal T}$ denote the discrete solution to
	\eqref{dnondiv}, where
	$$
	\begin{cases}
	 \ell=3 \text{ or } \ell \text{ is even with }\ell\geq 4 &\text{if } d=2,\\
	 \ell\geq 5 &\text{if }d=3 .
	 \end{cases}
	 $$
	If the solution $u$ to \eqref{nondivvar}
	satisfies $u\in H^s(\Omega)\cap H_0^1(\Omega)$ for some
	$0<s\leq\ell+1$, and $u_h$ is the solution of 
	 \eqref{dnondiv}, then
	\begin{equation*}
		\|D^2_{\mathcal T} (u-u_h)\|_{0}\leq Ch^{s-2}|u|_s
	\end{equation*}
	where $|\cdot|_s$ denotes the usual $H^s(\Omega)$ seminorm.
\end{myTheo}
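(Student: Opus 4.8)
The plan is to combine the quasi-optimal error estimate of Theorem~\ref{duni} with a standard polynomial approximation result for the spaces $V^{(\ell)}_{\mathcal T}$. First I would recall that the corollaries in Sections~\ref{s:2d} and~\ref{s:3d} guarantee that each of the spaces $V^{(\ell)}_{\mathcal T}$ in the admissible range satisfies the strong discrete Miranda--Talenti property \eqref{dMT}, so that Theorem~\ref{duni} applies and yields
\begin{equation*}
 \|D^2_{\mathcal T}(u-u_h)\|_0
 \leq
 \Bigl(1+\tfrac{\Lambda}{1-\sqrt{1-\varepsilon}}\Bigr)
 \inf_{w_h\in V^{(\ell)}_{\mathcal T}}\|D^2_{\mathcal T}(u-w_h)\|_0 .
\end{equation*}
Crucially, because $u\in H^2(\Omega)$ has no inter-element jumps and satisfies $A:D^2u=f$ pointwise, the Galerkin orthogonality \eqref{e:galerkinorthogonality} holds exactly and there is \emph{no} consistency (variational crime) term to estimate. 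It therefore suffices to exhibit a single $w_h\in V^{(\ell)}_{\mathcal T}$ with $\|D^2_{\mathcal T}(u-w_h)\|_0\leq C h^{s-2}|u|_s$.

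The key structural fact is that the local shape function space satisfies $P_\ell(T)\subseteq\mathcal P_{T,\ell}$, so the element reproduces all polynomials of degree $\ell$ and is expected to deliver the optimal approximation order $\ell+1$ in $L^2$, equivalently order $\ell-1$ in the piecewise $H^2$-seminorm. To make this quantitative I would introduce an interpolation operator $\Pi_h\colon H^s(\Omega)\cap H^1_0(\Omega)\to V^{(\ell)}_{\mathcal T}$, designed so that its image genuinely lies in the constrained space: all degrees of freedom shared between neighbouring simplices are assigned a common value, which automatically enforces the global continuity of $\Pi_h u$, the continuity of $\nabla\Pi_h u$ (and, for $d=3$, of $D^2\Pi_h u$) at the relevant subsimplices, the vanishing normal-jump moments built into the definition of $V^{(\ell)}_{\mathcal T}$, and the homogeneous boundary values. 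For sufficiently smooth $u$ one may take the canonical nodal interpolant defined by the functionals \eqref{e:dof2d_nodal_edge}--\eqref{e:dof2d_volume}, respectively \eqref{e:dof3d_nodal}--\eqref{e:dof3d_volume}; for the full range $0<s\leq\ell+1$, where point evaluations of $u$, $\nabla u$, or $D^2u$ need not be defined, I would instead use a Scott--Zhang--type quasi-interpolation built from local $L^2$ projections that reproduces $P_\ell$ on each element while respecting the same inter-element coupling.

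With $\Pi_h$ at hand, the estimate follows from the Bramble--Hilbert lemma together with an affine scaling argument on the reference simplex. On each $T\in\mathcal T$, polynomial reproduction of degree $\ell$ and the boundedness of $\Pi_h$ give the local bound $\|D^2(u-\Pi_h u)\|_{0,T}\leq C\,h_T^{\,s-2}\,|u|_{s,\omega_T}$, where $\omega_T$ denotes the shape-regularly bounded patch of elements whose degrees of freedom influence $\Pi_h u|_T$. Squaring, summing over all $T\in\mathcal T$, and exploiting the finite overlap of the patches yields $\|D^2_{\mathcal T}(u-\Pi_h u)\|_0\leq C\,h^{\,s-2}\,|u|_s$; choosing $w_h=\Pi_h u$ in the quasi-optimality bound then completes the proof.

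The main obstacle I anticipate is the construction and analysis of the interpolation operator in the low-regularity regime. Because several degrees of freedom are point evaluations of $u$ and of its first (and in three dimensions second) derivatives, the naive nodal interpolant is only defined under Sobolev embeddings requiring $s>1+d/2$ or $s>2+d/2$, and thus fails for small $s$. Replacing it by a quasi-interpolation that simultaneously lands in the nonconforming, constraint-laden space $V^{(\ell)}_{\mathcal T}$, remains stable and defined for all admissible $s$, and still reproduces $P_\ell$ locally is the delicate point; the remaining scaling and summation steps are routine.
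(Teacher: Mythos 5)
Your proposal is correct and follows essentially the same route as the paper: invoke the quasi-optimality bound of Theorem~\ref{duni} (valid because the spaces satisfy the strong discrete Miranda--Talenti property) and then bound the best-approximation error by standard interpolation estimates, resorting to an averaging-based quasi-interpolation operator in the low-regularity regime where nodal degrees of freedom are not defined. The paper's proof is exactly this argument, stated even more tersely with a reference to standard techniques for the quasi-interpolant; your discussion of the low-regularity obstacle matches the one case the paper singles out ($d=3$, $2<s<7/2$).
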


\begin{proof}
For $d=2$ and for $d=3$ with $s\geq 7/2$, the result follows from
standard interpolation error estimates \cite{finiteBrenner}.
For the remaining case $d=3$ and $2<s<7/2$, a suitable quasi-interpolation
operator can be designed by averaging as outlined in \cite{finitewang}.
We skip the details, which are based on standard techniques.
\end{proof}

\section{Application II: Biharmonic equation}\label{s:bih}
The second application of our new finite element families is the biharmonic equation.
Let $\Omega\subset \mathbb{R}^d$, $d=2,3$ be a bounded,
open, polytopal Lipschitz domain.
The weak form of the biharmonic equation $\triangle^2 u =f$ for some
right-hand side $f\in L^2(\Omega)$ and clamped boundary conditions
$u=\partial_n u=0$ over $\partial\Omega$
seeks $u\in H^2_0(\Omega)$ such that
\begin{equation}
\int_\Omega D^2u:D^2v\mathrm{d}x=\int_\Omega fv\mathrm{d}x
\quad\text{for all }v\in H^2_0(\Omega).
\label{bihar}
\end{equation}
Given a space $V_{\mathcal T}$ of piecewise polynomial functions with respect to
a triangulation $\mathcal T$, the discrete problem seeks
$u_h\in V_{\mathcal T}$ such that
\begin{equation}
\int_\Omega D^2_{\mathcal T}u_h:D^2_{\mathcal T}v_h\mathrm{d}x=\int_\Omega fv_h\mathrm{d}x
\quad\text{for all }v_h\in V_{\mathcal T}.
\label{dbihar}
\end{equation}
Given a face $F\in\mathcal F$,
let the face patch $\omega_F$ be the union of the simplices sharing the face $F$.
Given a measurable subdomain $G\subseteq\overline\Omega$,
let $\mathrm{P}^{m}_G$ denote the $L^2$-orthogonal projection operator to the 
polynomials of degree not larger than $m$ over $G$.

\begin{remark}\label{rem:clampedBC}
 In order to apply subspaces of the spaces
 $V^{(\ell)}_{\mathcal T}$ from Sections~\ref{s:2d}--\ref{s:3d},
 to the problem with clamped boundary conditions, we additionally need to
 enforce a discrete analogue of the condition $\partial_n u=0$
 on $\partial\Omega$.
 To this end, all degrees of freedom related to the normal derivative
 on boundary faces and all edge degrees of freedom for
 boundary edges are set to zero.
 For $d=2$ all degrees of freedom related to the boundary vertices
 are set to zero. For $d=3$ the vertex degrees of freedom
 $\partial_{nn}^2 u(z)$ for any boundary vertex $z\in\mathcal N$
 is left free unless $z$ is a boundary point where $\partial\Omega$
 is not differentiable.
\end{remark}

By applying a result of \cite{Hu&Zhang2019}, we directly obtain an error 
estimate for our finite element families adapted to the biharmonic
problem.

\begin{myTheo}
Let $u\in H^2_0(\Omega)$ be the solution to \eqref{bihar}.
Let the integer $\ell$ satisfy
	$$
	\begin{cases}
	 \ell=3 \text{ or } \ell \text{ is even with }\ell\geq 4 &\text{if } d=2,\\
	 \ell\geq 5 &\text{if }d=3 
	 \end{cases}
	 $$
and let $V_{\mathcal T}$ denote the subspace of the space
$V^{(\ell)}_{\mathcal T}$ (from Sections~\ref{s:2d}--\ref{s:3d})
described in Remark~\ref{rem:clampedBC}.
Then problem \eqref{dbihar} has a unique solution $u_h\in V_{\mathcal T}$,
which satisfies
\begin{align*}
\|D^2_{\mathcal T}(u-u_h)\|_0\leq C&\bigg(\inf_{v_h\in V_{\mathcal T}}\|D^2_{\mathcal T}(u-v_h)\|_0 
 + \Big(\sum_{F\in\mathcal F}\|(1-\mathrm{P}^{\ell-2}_{\omega_F})D^2u\|^2_{0,\omega_F}\Big)^{1/2}\\
 \quad
& +  \Big(\sum_{T \in \mathcal{T}}h_T^4\|(1-\mathrm{P}^{\ell-3}_{T})f\|^2_{0,T}\Big)^{1/2}   
\bigg).
\end{align*} 
If in addition the solution satisfies $u\in H_0^2(\Omega)\cap H^s(\Omega)$
for some $0<s\leq \ell+1$, then
$$
\|D^2_{\mathcal T}(u-u_h)\|_0
\leq C h^{s-2}|u|_s
+
\Big(\sum_{T \in \mathcal{T}}h_T^4\|(1-\mathrm{P}^{\ell-3}_{T})f\|^2_{0,T}\Big)^{1/2}   
\bigg).
$$
\end{myTheo}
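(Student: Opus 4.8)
The plan is to verify that the spaces $V_{\mathcal T}$ fit into the abstract error-analysis framework of \cite{Hu&Zhang2019} and then invoke its error estimate directly, as the statement announces. That framework yields precisely the asserted bound---a best-approximation term, a Hessian oscillation over face patches, and a data oscillation---provided three ingredients are in place: (i) coercivity of the discrete bilinear form of \eqref{dbihar} with respect to the broken norm $\|D^2_{\mathcal T}\cdot\|_0$; (ii) the shape functions contain the full polynomial space $P_\ell(T)$ on every element, so that local best approximation is of the expected order; and (iii) a weak continuity of the discrete functions strong enough to reduce the nonconformity (consistency) error to the stated Hessian oscillation. Ingredient (ii) holds by construction, since $\mathcal P_{T,\ell}=P_\ell(T)+\Phi^B_{T,\ell}$ in both dimensions.

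For ingredient (i), I would observe that the bilinear form of \eqref{dbihar} satisfies $a_h(v_h,v_h)=\|D^2_{\mathcal T}v_h\|_0^2$, so coercivity is automatic once $\|D^2_{\mathcal T}\cdot\|_0$ is shown to be a norm on $V_{\mathcal T}$. The latter is a discrete Poincaré--Friedrichs argument: if $\|D^2_{\mathcal T}v_h\|_0=0$ then $v_h$ is piecewise affine, and the global continuity $v_h\in H^1_0(\Omega)$ together with the continuity of $\nabla v_h$ at the vertices (2D) or along the edges (3D) and the clamped treatment of Remark~\ref{rem:clampedBC} forces $v_h\equiv 0$. Boundedness of the functional $v_h\mapsto(f,v_h)$ then follows from a discrete Poincaré inequality, and Lax--Milgram gives existence and uniqueness of $u_h$.

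The heart of the matter, and the main obstacle, is ingredient (iii). Following the medius analysis of \cite{Hu&Zhang2019}, I would introduce a conforming companion (enriching) operator $E_h\colon V_{\mathcal T}\to H^2_0(\Omega)$ and write the consistency error, for a test function $w_h\in V_{\mathcal T}$, as $a_h(u,w_h)-(f,w_h)=(D^2_{\mathcal T}u,D^2_{\mathcal T}(w_h-E_hw_h))+(f,E_hw_h-w_h)$, using that $E_hw_h$ is conforming and $u$ solves \eqref{bihar}, so that $a_h(u,E_hw_h)=(f,E_hw_h)$. For the first summand I would localize the approximation estimate for $w_h-E_hw_h$ to the face patches and exploit the enhanced moment continuity from Lemma~\ref{2Dsuper} (2D) and Lemma~\ref{3Dsuper} (3D)---namely $\int_F[\partial_n w_h]q\,dS=0$ for all $q\in P_{\ell-2}(F)$---which lets me subtract an arbitrary $P_{\ell-2}$ polynomial from the Hessian factor and so produces the patch oscillation $\|(1-\mathrm P^{\ell-2}_{\omega_F})D^2u\|_{0,\omega_F}$. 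The second summand is a pure data term which, by the approximation and orthogonality properties of the companion operator, is controlled by $\bigl(\sum_{T}h_T^4\|(1-\mathrm P^{\ell-3}_T)f\|^2_{0,T}\bigr)^{1/2}$. I expect the construction of $E_h$ and the verification that the continuity available for $V_{\mathcal T}$---in particular the edge continuity of $\nabla v_h$ in three dimensions, which governs the cancellation of the tangential second-derivative contributions along edges---suffices for its local approximation properties to be the genuinely technical point.

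Finally, the concrete rate under the regularity assumption $u\in H^s(\Omega)$ with $0<s\le\ell+1$ follows by estimating the best-approximation term by the interpolation error $\lesssim h^{s-2}|u|_s$ (using $P_\ell(T)\subseteq\mathcal P_{T,\ell}$) and the Hessian oscillation likewise by $\lesssim h^{s-2}|u|_s$, since $\mathrm P^{\ell-2}_{\omega_F}$ reproduces polynomials of degree $\ell-2\ge s-3$ while $D^2u\in H^{s-2}(\Omega)$; the data oscillation term is carried through unchanged, giving the second displayed estimate.
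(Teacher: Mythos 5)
Your proposal is correct and follows essentially the same route as the paper: both verify that $\|D^2_{\mathcal T}\cdot\|_0$ is a norm on $V_{\mathcal T}$ (giving existence and uniqueness via Lax--Milgram), establish the enhanced jump orthogonality $\int_F[\partial_n v_h]q\,\mathrm{d}S=0$ for $q\in P_{\ell-2}(F)$ from Lemmas~\ref{2Dsuper} and~\ref{3Dsuper}, and then invoke Theorem~2.1 of \cite{Hu&Zhang2019} to obtain the stated bound. The additional detail you give on the companion-operator mechanism is an unpacking of what that cited theorem does internally, not a departure from the paper's argument.
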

\begin{proof}
The definition of the spaces $V_{\mathcal T}$ ensures that
$\|D^2_{\mathcal T}\cdot\|_0$ defines a norm over $V_{\mathcal T}$
so that existence and uniqueness of discrete solutions follow from
standard arguments \cite{finiteBrenner}.
An adaptation of Lemma~\ref{2Dsuper} and Lemma~\ref{3Dsuper} 
shows for all faces $F$ that
$$
\int_F[\nabla v_h]q_{\ell-2}\mathrm{d}S = 0\quad 
\text{for all } q_{\ell-2}\in P_{\ell-2}(F).
$$ 
An application of Theorem~2.1 of \cite{Hu&Zhang2019} thus concludes the proof. 
\end{proof}

\section{Numerical Experiments}\label{s:num}
In this section, we present numerical results for the 
elliptic problem in nondivergence form and the biharmonic equation
(with clamped boundary conditions) in two and three space dimensions
on uniformly refined meshes.
We always display relative errors in the $L^2$ norm and the 
discrete $H^2$ seminorm,
abbreviated by $e_{L^2}^{\mathrm{rel}}$
and $e_{H^2}^{\mathrm{rel}}$, respectively.

\subsection{Nondivergence-form operator in two dimensions: first example}
We consider the elliptic operator in nondivergence form given through
the coefficient
$$
A(x,y) = \left(\begin{matrix}
	2&xy/|xy|\\
	xy/|xy|&2
\end{matrix}\right)
$$
on the square $\Omega=(-1,1)^2$ with an initial mesh that is
aligned to the discontinuities of $A$.
Then $\gamma =2/5,\varepsilon = 3/5.$
The exact solution is given by
$u(x,y) =xy(1-e^{(1-|x|)})(1-e^{1-|y|})$.
We compare three numerical methods:
the quadratic Specht triangle and our finite element space
$V_{\mathcal T}^{(\ell)}$ from Section~\ref{s:2d} with $\ell=3$ and $\ell=4$.
The results are displayed in Tables~\ref{tab:2dnondiv_a}--\ref{tab:2dnondiv_b}.
The quadratic Specht element and our element with $\ell=3$
converge with order close to 2 in the discrete $H^2$ seminorm
and with an order better than 3 in the $L^2$ norm.
For $\ell=4$, we observe third-order convergence in the discrete $H^2$ seminorm
and a convergence order better than 4 in $L^2$.

\begin{table}
	\begin{center} 
		\begin{tabular}{ccccccccc}\toprule
		&\multicolumn{4}{c}{quadratic Specht}& \multicolumn{4}{c}{$V_{\mathcal T}^{(3)}$}\\
 \cmidrule(lr){2-5}\cmidrule(lr){6-9}
	$h$             &  $e_{L^2}^{\mathrm{rel}}$&  rate & $e_{H^2}^{\mathrm{rel}}$
	               & rate & $e_{L^2}^{\mathrm{rel}}$ & rate &$e_{H^2}^{\mathrm{rel}}$ &  rate \\
			\hline
	1/16	       & \num{3.85e-03}&    &\num{2.73e-02}&   & \num{3.72e-03} &  & \num{2.93e-02}  &  \\
	1/32	      &\num{4.13e-04}&  3.22   &\num{7.60e-03}& 1.85   & \num{4.01e-04} & 3.21 & \num{8.13e-03}  &  1.85\\
	1/64	       &\num{4.16e-05}& 3.31   &\num{2.00e-03}&1.92   & \num{4.03e-05} & 3.32 & \num{2.16e-03}  & 1.94 \\
	1/128	       & \num{4.04e-06}& 3.36   &\num{5.17e-04}& 1.92  & \num{2.27e-06} & 4.15 & \num{5.54e-04}  &1.92  \\
	\bottomrule
		\end{tabular}
	\end{center} 
	\caption{Convergence history for the nondivergence-form operator in two dimensions:
    quadratic Specht element and $V_{\mathcal T}^{(\ell)}$ with $\ell=3$.
    \label{tab:2dnondiv_a}}
\end{table}

\begin{table}
	\begin{center} 
			\begin{tabular}{ccccc}
	\toprule
     $h$    &   $e_{L^2}^{\mathrm{rel}}$&rate& $e_{H^2}^{\mathrm{rel}}$&rate\\
     \midrule
     1/8	&	\num{5.17e-04}&		&\num{6.12e-02}&\\
     1/16	&	\num{2.73e-05}& 4.24&\num{8.20e-04}&2.88 \\
     1/32	&	\num{1.33e-06}& 4.36&\num{1.06e-04}&2.95\\
     1/64 	&	\num{4.69e-08}& 4.82& \num{1.35e-05}&2.98\\
	\bottomrule
	\end{tabular}
\end{center} 
\caption{Convergence history for the nondivergence-form operator in two dimensions:
    $V_{\mathcal T}^{(\ell)}$ with $\ell=4$.
    \label{tab:2dnondiv_b}}
\end{table}

\subsection{Nondivergence-form operator in two dimensions: second example}
For the second example we choose the same coefficient matrix $A$ and domain $\Omega$ as before.
Given a positive parameter $\iota>0$, we consider the exact solution
$$
u(x,y) = |(x,y)|^{3/2}\left(\frac{y+1}{2}+ \frac{e^{-1/\iota}-e^{(y-1)/2\iota}}{1-e^{-1/\iota}}\right)
$$
with boundary layer near the line $(-1,1)\times \{1\}$
and a point singularity at the origin
so that $u\in H^{5/2-\nu}(\Omega)$ for any $\nu>0$.
Qualitatively, this inclusion holds for any positive
choice of $\iota$, but small values of $\iota$ will
cause a large $H^2(\Omega)$ norm of $u$, which makes the
point singularity less visible on moderately fine meshes.
We apply our element with $\ell=3$ to this example.
We consider local mesh refinement steered by local contributions of 
the heuristic refinement indicator
\begin{equation*}
	\eta := \sqrt{  \sum_{T\in\mathcal T} \|f-A:D^2 u_h\|_{L^2(T)}^2 }.
\end{equation*}
 Figure \ref{MS} shows the results with $\iota \in \{1/100,1/10\}$.
 Due to the boundary layer and the singularity at the origin, 
 uniform mesh refinement results in suboptimal convergence rates.
 However, the adaptive method recovers the optimal convergence
 rate in terms of the total number of degrees of freedom. 
 We observe for $\iota=1/100$ a strong impact of the boundary layer
 such that the convergence rate is visible after $10^3$ degrees of freedom only.
 For $\iota=1/10$ this effect is much weaker.
 Figure~\ref{MS01} displays an adaptive mesh for $\iota=1/10$
 with about 5\,000 degrees of freedom.
 It can be observed, that the boundary layer is resolved and that the 
 refinement concentrates around the singularity.
 A further observation is that the quantity $\eta$ is equivalent to the 
 error in the $H^2$ norm.
\begin{figure}
	\includegraphics[width=.7\textwidth]{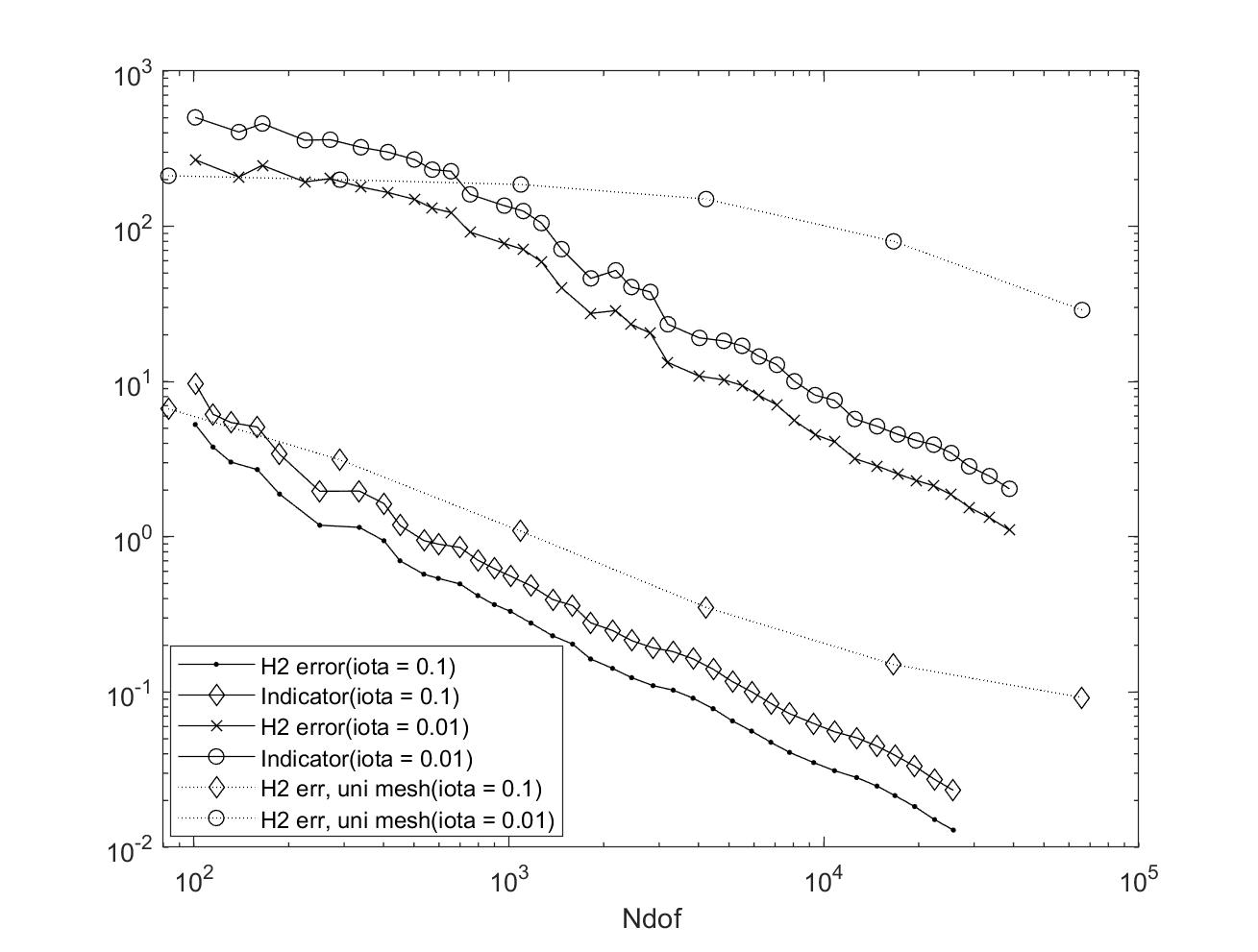}
	\caption{Values of the error and $\eta$ for our element with $\ell=3$ 
	    in the second example with parameter $\iota\in \{1/100,1/10\}$.}
\label{MS}
\end{figure}
\begin{figure}
	\includegraphics[width=.7\textwidth]{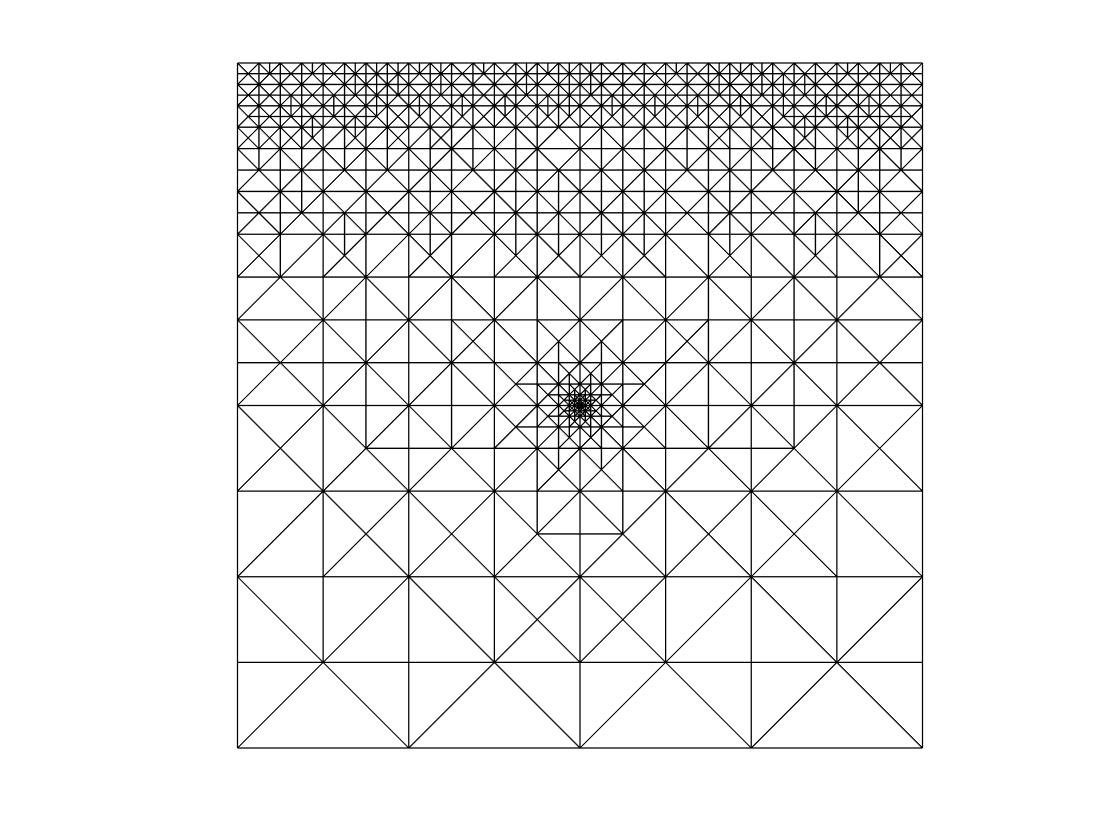}
	\caption{The mesh with around $5\,127$ degrees of freedom generated by our element with $\ell=3$ when $\iota = 1/10$.}
	\label{MS01}
\end{figure}
\subsection{Biharmonic operator in two dimensions}
We consider the biharmonic equation with clamped boundary conditions on
the square $\Omega=(0,1)^2$.
The exact solution is chosen as
$u(x,y)=(\mathrm{sin}(\pi x)\mathrm{sin}(\pi y))^2$.
We compare the three methods based 
the quadratic Specht triangle and our finite element space
$V_{\mathcal T}^{(\ell)}$ from Section~\ref{s:2d} with $\ell=3$ and $\ell=4$.
The results are displayed in Tables~\ref{tab:2dbih_a}--\ref{tab:2dbih_b}.
The quadratic Specht element and our element with $\ell=3$
converge with order close to 2 in the discrete $H^2$ seminorm
and with order close to 4 in the $L^2$ norm.
For $\ell=4$, we observe third-order convergence in the discrete $H^2$ seminorm
and a convergence order better than 5 in $L^2$.

\begin{table}
	\begin{center} 
		\begin{tabular}{ccccccccc}\toprule
			&	\multicolumn{4}{c}{quadratic Specht}& \multicolumn{4}{c}{$V_{\mathcal T}^{(3)}$}\\
			\cmidrule(lr){2-5}\cmidrule(lr){6-9}
			$h$             &  $e_{L^2}^{\mathrm{rel}}$&  rate & $e_{H^2}^{\mathrm{rel}}$ & rate & $e_{L^2}^{\mathrm{rel}}$ & rate &$e_{H^2}^{\mathrm{rel}}$ &  rate \\
			\hline
	1/16	       & \num{6.53e-03}&       &\num{7.78e-02}&       & \num{6.40e-03} &      & \num{5.21e-02}  &  \\
	1/32	       & \num{6.61e-04}&  3.32 &\num{2.52e-02}& 1.67  & \num{6.27e-04} & 3.34 & \num{1.63e-02}  &  1.68\\	
	1/64	       & \num{5.04e-05}& 3.71  &\num{6.71e-03}& 1.87  & \num{4.72e-05} & 3.73 & \num{4.41e-03}  & 1.88 \\
	1/128	       & \num{3.35e-06}& 3.91  &\num{1.72e-03}& 1.96  & \num{3.14e-06} & 3.91 & \num{1.13e-04}  &1.96   \\
	\bottomrule
		\end{tabular}
	\end{center} 
	\caption{Convergence history for the biharmonic equation in two dimensions:
    quadratic Specht element and $V_{\mathcal T}^{(\ell)}$ with $\ell=3$.
    \label{tab:2dbih_a}}
\end{table}

\begin{table}
	\begin{center} 
		\begin{tabular}{ccccc}\toprule
			$h$  & $e_{L^2}^{\mathrm{rel}}$&rate&$e_{H^2}^{\mathrm{rel}}$ &rate\\
			\midrule
			1/8	&\num{4.26e-03}&    &  \num{5.99e-02}& \\
			1/16&\num{1.09e-05}&5.27&\num{9.06e-03}&2.73 \\
			1/32&\num{2.35e-06}&5.54&\num{1.20e-03}&2.91\\
			1/64&\num{5.79e-08}&5.34& \num{1.53e-04}&2.97\\
			\bottomrule
		\end{tabular}
	\end{center} 
     \caption{Convergence history for the biharmonic equation in two dimensions:
    $V_{\mathcal T}^{(\ell)}$ with $\ell=4$.
    \label{tab:2dbih_b}}
\end{table}

\subsection{Nondivergence-form operator in three dimensions}
We consider the elliptic operator in nondivergence form given through
the coefficient
$$
A (x,y,z)= \left(\begin{matrix}
	3&xyz/|xyz|&xyz/|xyz|\\
	xyz/|xyz|&3&xyz/|xyz|\\
	xyz/|xyz|&xyz/|xyz|&3
\end{matrix}\right).
$$
The domain is $\Omega=(-1,1)^3$
and we compute 
$\gamma = 11/27,~\varepsilon = 5/11.$ 
We choose $u(x,y,z) = \mathrm{sin}(\pi x) \mathrm{sin}(\pi y) \mathrm{sin}(\pi z)$
as the exact solution.
We approximate the problem with our finite element space
$V_{\mathcal T}^{(\ell)}$ from Section~\ref{s:3d} with $\ell=5$.
The results are displayed in Table~\ref{tab:3dnondiv}.
We observe convergence of order 4 in the discrete $H^2$ seminorm
and convergence of order 6 in the $L^2$ norm.

\begin{table}
	\begin{center} 
		\begin{tabular}{ccccc}\toprule
		$h$  &$e_{L^2}^{\mathrm{rel}}$&rate&$e_{H^2}^{\mathrm{rel}}$&rate\\
		\midrule
		1/2  &\num{4.96e-01}& &  \num{5.41e-01}& \\
		1/4  &\num{1.51e-02}&5.04 &\num{8.68e-02}&2.64\\
		1/8  &\num{2.81e-04}&5.75&\num{8.97e-03}&3.28\\
		1/16 &\num{2.54e-06}&6.79& \num{5.20e-04}&4.11\\
		\bottomrule
		\end{tabular}
	\end{center} 
	\caption{Convergence history for the nondivergence-form operator in three dimensions
	         with the lowest-order method $(\ell=5)$.
	      \label{tab:3dnondiv}}
\end{table}

\subsection{Biharmonic operator in three dimensions}
We consider the biharmonic equation with clamped boundary conditions on
the cube $\Omega=(0,1)^3$.
The exact solution is given by
$u(x,y,z) = (\mathrm{sin}(\pi x)\mathrm{sin}(\pi y)\mathrm{sin}(\pi z) )^2.$
We approximate the problem with our finite element space
$V_{\mathcal T}^{(\ell)}$ from Section~\ref{s:3d} with $\ell=5$.
The results are displayed in Table~\ref{tab:3dbih}.
We observe convergence of order 4 in the discrete $H^2$ seminorm
and convergence better than of order 6 in the $L^2$ norm.

\begin{table}
	\begin{center} 
		\begin{tabular}{ccccc}\toprule
			$h$    &$e_{L^2}^{\mathrm{rel}}$&rate&$e_{H^2}^{\mathrm{rel}}$&rate\\
			\midrule
			1/2& \num{6.14e-02}&     & \num{1.71e-01} &   \\
			1/4&\num{4.79e-03} & 3.68& \num{4.07e-02} &2.07\\
			1/8&\num{5.69e-05}&6.40  & \num{3.35e-03}&3.60\\
			1/16&\num{4.59e-07} &6.95& \num{1.78e-04} &4.23\\
			\bottomrule
		\end{tabular}
	\end{center} 
	\caption{Convergence history for the biharmonic equation in three dimensions
	         with the lowest-order method $(\ell=5)$.
	\label{tab:3dbih}}
\end{table}

\section{Conclusions}\label{s:conclusion}

Based on the property of $C^1$ continuity across $(d-2)$-dimensional subsimplices,
we have constructed continuous but $H^2$-nonconforming finite element spaces
that satisfy the Miranda-Talenti inequality in 2D and 3D.
This property avoids the requirement of additional stabilization terms 
that would usually be needed when $H^2$ nonconforming elements are 
applied to elliptic equations in non-divergence form under the
Cordes condition.
The finite element can be applied to other problems, 
such as Hamilton--Jacobi--Bellman equations, biharmonic equations,
or singularly perturbed fourth order elliptic problems.
We have implemented the new method for a biharmonic and a nondivergence-form
model problem.
In all experiments, we observe the optimal convergence rates in the 
discrete $H^2$ seminorm. 
Moreover, we observe higher convergence orders for the error in the 
$L^2$ norm.
The use of the refinement indicator $\eta$ in an adaptive mesh refining
algorithm is observed to lead to optimal convergence rates
for singular solutions. The theoretical justification of the error 
estimator exceeds the scope of this article and is left for future
research.

\bibliography{pkuthss}
\bibliographystyle{plain}
\end{document}